\DeclareMathOperator*{\argmax}{arg\,max}
\theoremstyle{plain}
\newtheorem{theorem}{Theorem}[section]
\newtheorem{lemma}[theorem]{Lemma}
\newtheorem{proposition}[theorem]{Proposition}
\theoremstyle{definition}
\newtheorem{definition}[theorem]{Definition}
\newtheorem{remark}[theorem]{Remark}
\newtheorem*{remark*}{Remark}
\newtheorem{example}[theorem]{Example}
\numberwithin{equation}{section}
\begin{document}

\title[Card guessing and the birthday problem without replacement]{Card guessing and the birthday problem for sampling without replacement}

\author[J. He]{Jimmy He}
\address[J. He]{Department of Mathematics, Stanford University \\ 450 Jane Stanford Way, Stanford CA 94305 (USA).}
\email{jimmyhe@stanford.edu}
\author[A.~Ottolini]{Andrea Ottolini}
\address[A.~Ottolini]{Department of Mathematics, Stanford University \\ 450 Jane Stanford Way, Stanford CA 94305 (USA).}
\email{ottolini@stanford.edu}
\begin{abstract}
    Consider a uniformly random deck consisting of cards labelled by numbers from $1$ through $n$, possibly with repeats. A guesser guesses the top card, after which it is revealed and removed and the game continues. What is the expected number of correct guesses under the best and worst strategies? We establish sharp asymptotics for both strategies. For the worst case, this answers a recent question of Diaconis, Graham, He and Spiro, who found the correct order \cite{diaconis2020card}. As part of the proof, we study the birthday problem for sampling without replacement using Stein's method. 
\end{abstract}
\date{\today}

%\subjclass[2020]{60C05 (60F05)}
\keywords{Card guessing game, birthday problem}

\thanks{}
\maketitle
%\tableofcontents
\section{Introduction}
Consider a deck consisting of cards of different types. Throughout this paper, we will tacitly assume that all our decks are thoroughly shuffled. We analyze the following experiment: a person is asked to guess the type of the card on top of the deck, and afterwards they are told the correct type and the card is removed from the deck. This is continued with the second card and so on. The game stops when there are no cards left in the deck. If the player's strategy is to maximize/minimize the expected number of correct guesses, how many cards will be guessed correctly? 

These problems are referred to as \emph{complete feedback games}, since the player has complete knowledge about the past. They have been studied in connection with clinical trials \cite{Blackwell1957, EFRON1971}, and parapsychology experiments \cite{Diaconis1978}. 

In \cite{DG81}, Diaconis and Graham show that for complete feedback games, maximizing (respectively, minimizing) the expected number of correct guesses is achieved with a greedy strategy: the player should guess at each step a type among the ones that occur the most (respectively, the least) in the remaining deck. They provide asymptotics in both scenarios when dealing with decks consisting of $n$ distinct types, each card type occurring $m$ times, when $m$ is large and $n$ is fixed. 

In recent work \cite{diaconis2020card}, similar asymptotic results are obtained in the opposite regime. For $n$ large and $m$ fixed, it is shown that the best strategy gives asymptotically $H_m\ln n$ correct guesses, where $H_m:=1+\ldots +\frac{1}{m}$, and the worst strategy gives order $n^{-\frac{1}{m}}$ correct guesses.

Our work gives much sharper asymptotics for both the best and the worst strategy when $n$ is large and $m$ is fixed. In particular, for the worst strategy, we identify the constant in the leading term, answering a question of Diaconis, Graham, He and Spiro \cite{diaconis2020card}. For the best strategy, we also give asymptotics even when the number of cards of each type is not the same, provided the deck is balanced in a certain sense. Numerical simulations show that for the best strategy, out approximation is very good even for small $n$ as long as $m$ is not too large. For the worst strategy our approximation is good for $m=2$ and $m=3$ but for larger $m$, $n$ needs to be very large.

Given a deck, let $T_j$ be the last time, counting from the bottom of the deck, that no card type appears more than $j$ times. These times are exactly when the best strategy might change what cards are guessed. Our proof proceeds by obtaining good asymptotics for the distribution of the $T_j$'s through studying a version of the birthday problem without replacement. For the worst strategy, the proof is similar. We believe that our results on the birthday problem may be of independent interest. The main tool is a version of Stein's method for Poisson approximation, and we construct the necessary couplings needed to apply the theory.

\subsection{Main results}
We now formally state our main results. A \emph{deck} will be a word in the letters $\{1,\dotsc, n\}$, which we call \emph{types}. Given a deck, let $\bold m=(m_1,\ldots, m_n)$, where $m_i$ denotes the multiplicity of cards of type $i\in\{1,\ldots, n\}$. Denote by $N=\sum m_i$ the total number of cards, by $m=\frac{N}{n}$ the average multiplicity, and by $m^*=\max_{1\leq i\leq n} m_i$ the largest multiplicity. If each type occurs with multiplicity $m$, we write $\bold m=m\bold 1_n$. We call any strategy which maximizes or minimizes the expected number of correct guesses the \emph{best} or \emph{worst} strategy respectively. Let $S_{\bold m}^+$ and $S_{\bold m}^-$ denote the number of correct guesses under the best and worst strategy for a deck with multiplicities $\bold m$. Define
\begin{equation}\label{def: betabalance}
    \gamma_j=\left(\frac{1}{n}\sum_{i=1}^n{m_i\choose j}\right)^{\frac{1}{j}},\quad \beta_j=\frac{1}{n}\sum_{i=1}^n\frac{1}{m^j}{m_i\choose j}=\frac{\gamma_j^j}{m^j}.
\end{equation}
The following theorems gives sharp asymptotics for the expected number of correct guesses under the best and worst strategies. 
\begin{theorem}\label{thm: best case, weakversion}
Consider a deck with $n$ distinct card types, with multiplicities $\bold m=(m_1,\dotsc, m_n)$. Let $\epsilon\in (0,1]$ be the fraction of types that appear with multiplicity $m^*=\max m_i$. Then
\begin{equation*}
    \mathbb{E}[S_{\bold m}^+]=H_{m^*} H_n+\sum_{j=1}^{m^*} \ln \gamma_j+O\left(\ln n\left(\frac{\ln n}{n}\right)^{\frac{1}{m^*}}\right).
\end{equation*}
where $H_n=1+\dotsc\frac{1}{n}$, and the implicit constant depends on $m^*$ and $\epsilon$.
\end{theorem}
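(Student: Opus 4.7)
The plan is to reduce the expected payoff of the greedy best strategy to asymptotics for the transition times $T_j$, and then to obtain these asymptotics through a Poisson approximation for a sampling-without-replacement birthday problem via Stein's method.

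\textbf{Reduction.} Under the greedy strategy, the probability of a correct guess at the step when $t$ cards remain equals $M_t/t$, where $M_t$ is the largest multiplicity among the $t$ remaining (bottom) cards. By the definition of $T_j$ we have $M_t=j$ precisely when $T_{j-1}<t\leq T_j$ (with $T_0=0$, $T_{m^*}=N$), and so
\begin{equation*}
    \mathbb{E}[S_{\bold m}^+]=\sum_{t=1}^N\mathbb{E}[M_t/t]=\sum_{j=1}^{m^*}j\,\mathbb{E}[H_{T_j}-H_{T_{j-1}}]=m^*H_N-\sum_{j=1}^{m^*-1}\mathbb{E}[H_{T_j}]
\end{equation*}
after Abel summation. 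Using $H_N=H_n+\ln m+O(1/n)$, the theorem reduces to a sharp estimate of the form $\mathbb{E}[H_{T_j}]=\ln m+\tfrac{j}{j+1}H_n-\ln\gamma_{j+1}+(\text{error})$ for each $j<m^*$: substituting back and using $\gamma_1=m$ together with $\sum_{j=1}^{m^*-1}j/(j+1)=m^*-H_{m^*}$, the $\ln m$ terms cancel and the expression collapses to $H_{m^*}H_n+\sum_{j=1}^{m^*}\ln\gamma_j$.

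\textbf{Birthday problem and Stein approximation.} Reading the deck from the bottom is equivalent to sampling without replacement. Let $W_j(k)$ count the types appearing at least $j+1$ times among the first $k$ samples, so that $\{T_j\geq k\}=\{W_j(k)=0\}$. A direct hypergeometric calculation gives $\lambda_j(k):=\mathbb{E}[W_j(k)]\sim n\gamma_{j+1}^{j+1}k^{j+1}/N^{j+1}$ in the relevant regime $k\ll N$. The crucial input is a quantitative Poisson approximation $\Pr(T_j\geq k)\approx e^{-\lambda_j(k)}$, which I would prove via Stein's method using a size-biased (or exchangeable-pair) coupling: forcing $j+1$ positions of a prescribed type into $[k]$ and locally resampling bounds the total-variation error by the typical overlap of such couplings across types. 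With a bound uniform in $k$ in hand, the identity $\mathbb{E}[H_{T_j}]=\sum_{k\geq 1}\Pr(T_j\geq k)/k$ together with the substitution $u=\lambda_j(k)$ turns the sum into a gamma-type integral whose logarithmic part $\int_0^\infty(\ln u)e^{-u}\,\mathrm{d}u=-\gamma$ is absorbed by $H_n-\ln n=\gamma+o(1)$, yielding the target asymptotic for $\mathbb{E}[H_{T_j}]$.

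\textbf{Main obstacle.} The delicate step is the quantitative Poisson approximation: the error must be uniform in $k$ and integrable against $1/k$. The worst case is $j=m^*-1$, where $T_{m^*-1}$ lives at scale $n^{(m^*-1)/m^*}/\gamma_{m^*}$; since $\gamma_{m^*}^{m^*}=\epsilon$, this accounts for the $\epsilon$-dependence of the implicit constant. The upper tail of $T_{m^*-1}$ beyond level $n^{(m^*-1)/m^*}(\ln n)^{1/m^*}$ has to be handled directly, and balancing this truncation against the Stein bound on the body of the distribution is exactly what generates the factor $(\ln n/n)^{1/m^*}$ in the theorem. Designing couplings whose Stein bound does not degrade with $m^*$, while remaining robust enough to treat non-uniform multiplicities (not just $\bold m=m\bold 1_n$), is where the main effort will go.
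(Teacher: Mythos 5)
Your proposal follows essentially the same route as the paper: both reduce $\mathbb{E}[S_{\mathbf m}^+]$ to $m^*H_N-\sum_{j=1}^{m^*-1}\mathbb{E}[H_{T_j}]$, both obtain the asymptotics of $T_j$ via a Stein/Barbour--Holst--Janson Poisson approximation built from a coupling that forces $j+1$ positions of a chosen type into the initial segment, and both balance the Stein bound against a tail truncation at scale $\lambda\asymp\ln n$ to produce the $(\ln n/n)^{1/m^*}$ error. The only differences are cosmetic: you count types appearing $\geq j+1$ times while the paper counts $(j+1)$-tuples of equal positions (the paper's choice makes $\mathbb{P}(Y_{\mathbf s}=1)$ constant by exchangeability, which simplifies the Stein bound), and you work directly with $\mathbb{E}[H_{T_j}]=\sum_k\mathbb{P}(T_j\geq k)/k$ rather than passing through $\mathbb{E}[\ln T_j]+\gamma$ as the paper does.
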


When $\mathbf{m}=m\mathbf{1}_n$ (i.e. when all card types have the same multiplicity), Theorem \ref{thm: best case, weakversion} gives the estimate
\begin{equation*}
    \mathbb{E}[S_{\bold m}^+]=H_{m} H_n+\sum_{j=1}^{m} \ln \gamma_j+O\left(\ln n\left(\frac{\ln n}{n}\right)^{\frac{1}{m}}\right),
\end{equation*}
where $\gamma_j={m\choose j}^{\frac{1}{j}}$. We now turn to the worst strategy.

\begin{theorem}
\label{thm: worst case}
Consider a deck with $n$ distinct card types, with multiplicities $\bold m=m\bold 1_n$. Then
\begin{equation*}
    \mathbb E[S^-_{\bold m}]=\sum_{j=\lfloor\frac{m}{2}\rfloor+1}^{m}\frac{\Gamma\left(\frac{j+1}{j}\right)}{\gamma_j n^{\frac{1}{j}}}+O\left(n^{-\frac{2}{m}}\log^2 n\right),
\end{equation*}
where $\Gamma$ denotes the gamma function, and the implicit constant depends on $m$.
\end{theorem}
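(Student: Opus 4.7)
My plan is to reduce $\mathbb{E}[S^-_{\mathbf{m}}]$ to a sum involving the birthday times $T_j$ and then estimate those via sharp Poisson approximation. Since $\mathbf{m}=m\mathbf{1}_n$, at time $t+1$ the worst strategy guesses a type achieving the minimum multiplicity in the remaining bottom $N-t$ cards and is correct with probability $\min_i a_i(t)/(N-t)$, where $a_i(t) = m - b_i(t)$ and $b_i(t)$ is the multiplicity of type $i$ among the top $t$ cards. Hence $\min_i a_i(t) = m - M_t$ with $M_t := \max_i b_i(t)$. Writing $m - M_t = \sum_{j=0}^{m-1}\mathbf{1}[M_t \leq j]$, using the top-bottom symmetry of the uniform shuffle to identify $M_t$ in distribution with the analogous maximum $M_t^{\mathrm{bot}}$ over the bottom $t$ cards, and noting $\{M_t^{\mathrm{bot}} \leq j\} = \{T_j \geq t\}$, one arrives at the exact identity
$$\mathbb{E}[S^-_{\mathbf{m}}] = \sum_{j=0}^{m-1}\sum_{t=0}^{N-1}\frac{P(T_j \geq t)}{N-t}.$$
The $j=0$ term contributes only $1/N$ and is absorbed into the error.

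Next, I would exploit the identity $\{T_j \geq t\} = \{W_{j+1}(t) = 0\}$, where $W_{j+1}(t) := \sum_{i=1}^n \binom{X_i(t)}{j+1}$ counts $(j+1)$-wise collisions in the bottom $t$ cards and $X_i(t)$ denotes the multiplicity of type $i$ there. A direct factorial-moment computation yields
$$\mathbb{E}[W_{j+1}(t)] = \frac{t(t-1)\cdots(t-j)}{N(N-1)\cdots(N-j)}\sum_{i=1}^n \binom{m_i}{j+1} = \frac{\beta_{j+1}\,t^{j+1}}{n^{j}}\bigl(1+O(j^2/t + t/N)\bigr),$$
which stabilizes when $t\asymp n^{j/(j+1)}$. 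Since $W_{j+1}(t)$ is a sum of weakly dependent rare indicators (one for each $(j+1)$-subset of same-type cards falling entirely in the bottom $t$), the Chen-Stein method applies. Constructing a size-bias or local coupling adapted to sampling without replacement --- the content of the separate birthday-problem result advertised in the introduction --- gives
$$\bigl|P(W_{j+1}(t)=0)-\exp(-\mathbb{E}[W_{j+1}(t)])\bigr|$$
small uniformly for $t$ up to a truncation level $t^* \lesssim n^{j/(j+1)}\log n$; beyond $t^*$, a crude tail bound shows that $P(T_j \geq t)$ is polynomially small in $n^{-1}$ and contributes negligibly to the sum.

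Substituting back and replacing $1/(N-t)$ by $1/N$ (the correction $t/(N(N-t))$ integrates to a negligible contribution since $P(T_j \geq t)$ is concentrated on $t \ll N$), the $j$-th inner sum becomes
$$\frac{1}{N}\sum_{t\geq 0}\exp\Bigl(-\frac{\beta_{j+1}\,t^{j+1}}{n^{j}}\Bigr) \approx \frac{1}{N}\int_0^{\infty}\exp\Bigl(-\frac{\beta_{j+1}\,u^{j+1}}{n^{j}}\Bigr)\,du = \frac{\Gamma\bigl(1+\tfrac{1}{j+1}\bigr)}{\gamma_{j+1}\,n^{1/(j+1)}},$$
using $\beta_{j+1}^{1/(j+1)}=\gamma_{j+1}/m$ and $N=mn$. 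Reindexing $j\mapsto j-1$ produces the sum $\sum_{j=1}^{m}\Gamma((j+1)/j)/(\gamma_j n^{1/j})$, and the terms with $j \leq \lfloor m/2\rfloor$ satisfy $n^{-1/j}=O(n^{-2/m})$ and are absorbed into the error, yielding the claimed formula.

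The main obstacle is the Poisson-approximation step: one must construct a coupling well suited to the multivariate hypergeometric statistic $(X_1(t),\dots,X_n(t))$ and carry the resulting Stein bound through the weighted sum $\sum_t (N-t)^{-1}$, so that the cumulative error falls within $O(n^{-2/m}\log^2 n)$. The two logarithmic factors should arise, respectively, from the harmonic sum $\sum_{t\leq t^*}1/(N-t) \asymp \log n$ and from the truncation cutoff $t^* \lesssim n^{j/(j+1)}\log n$ needed to control the right tail of $T_j$.
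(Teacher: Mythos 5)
Your proposal follows essentially the same route as the paper: reduce $\mathbb{E}[S^-_{\mathbf m}]$ to $\sum_{j=0}^{m-1}\sum_t \mathbb{P}(T_j\geq t)/(N-t)$ via the conditional law of the $(t+1)$-th guess, truncate at $t^* \asymp n^{j/(j+1)}\log n$ using an exponential tail bound, replace $\mathbb{P}(T_j\geq t)$ by $e^{-\beta_{j+1} t^{j+1}/n^j}$ via the birthday-problem Poisson approximation, replace $1/(N-t)$ by $1/N$, and evaluate the resulting Gamma-integral, discarding terms with $j\leq\lfloor m/2\rfloor$; this matches the paper's argument step for step.

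One bookkeeping slip worth flagging: your attribution of the $\log^2 n$ factor is off. Since $t^* \asymp n^{j/(j+1)}\log n \ll N = mn$, the weighted sum $\sum_{t\leq t^*} 1/(N-t)$ is $O(t^*/N)$, not $\asymp \log n$ — the denominators stay bounded below by a constant multiple of $N$ throughout the truncated range. The dominant error arises from pushing the Poisson-approximation error $O(t/n)$ through the sum, giving $\sum_{t\leq t^*} \frac{t/n}{N-t} \asymp \frac{(t^*)^2}{nN} = O\left(n^{-2/(j+1)}\log^2 n\right)$; both logarithms come from the square of the cutoff $t^*$, and the same order of error appears again when $1/(N-t)$ is replaced by $1/N$. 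This doesn't affect your final bound, which is correct, but the mechanism differs from the one you described.
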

\begin{remark}
Every deck of the form $\bold m=m\bold 1_n$ satisfies the hypothesis of Theorem \ref{thm: best case, weakversion} with the choice of $\epsilon=1$. In general, the assumption is meant to avoid degenerate cases where one card type dominates the others. 
\end{remark}
\begin{remark}
It is natural to ask what the distribution of $S_{\mathbf{m}}^{\pm}$ is, and in particular whether they satisfy a central limit theorem. While our methods give sharp asymptotics, they do not appear to be able to determine the asymptotic distribution and so we leave this as an open problem.
\end{remark}

\begin{remark}
Theorem \ref{thm: worst case} identifies the leading term as
\begin{equation*}
    \mathbb E(S^-_{\bold m)}\sim \Gamma\left(\frac{m+1}{m}\right)n^{-\frac{1}{m}},
\end{equation*}
answering Question 4.1 of \cite{diaconis2020card}. We remark that while the proof of Theorem \ref{thm: worst case} gives additional terms up to $j=1$, numerical simulations suggest that the error in \ref{thm: worst case} is more or less correct and that there are some additional terms of order about $n^{-\frac{2}{m}}$ not coming from the proof of Theorem \ref{thm: worst case}.
\end{remark}
The proofs of both Theorems \ref{thm: best case, weakversion} and \ref{thm: worst case} follow the same strategy. Let $\bold Z=(Z_1,\ldots, Z_N)$ be the sequence of types extracted, starting from the bottom (in fact, for the worst strategy is is more convenient to start from the top, and the following heuristic has to be adapted). Define, for $1\leq j\leq m^*-1$,
\begin{align*}
    T_j:=\argmax_{1\leq t\leq N}\left\{\max_{1\leq i\leq n}\left|\{1\leq s\leq t: Z_s=i\}\right|\leq j\right\}
\end{align*}
In words, $T_j$ is the last time, starting from the bottom of the deck, that no card type appears more than $j$ times. The $T_j$ are exactly the points at which the card guessed under the best strategy might change, and so $\mathbb{E}[S_{\bold m}^{\pm}]$ can be studied through understanding the distribution of the $T_j$. 

We establish the following result on the distribution of the $T_j$'s. It can be viewed as an analogue of the birthday problem for sampling without replacement, and may be of independent interest.

\begin{theorem}
\label{thm: Tj approx even univariate}
Consider a deck with multiplicities $\bold m$, and fix $j$ with $1\leq j\leq m^*-1$ and $t$ with $1\leq t\leq N$. Assume that for some $\epsilon>0$, we have $m\geq \epsilon m^*$, and the fraction of cards of types that appear with multiplicity at least $\min(2j+1,m^*)$ is at least $\epsilon$. For $\beta$ as in \eqref{def: betabalance}, let
\begin{equation*}
    \lambda=\frac{t^{j+1}}{n^j}\beta_{j+1}.
\end{equation*}
Then
\begin{equation*}
    |\mathbb P(T_j\geq t)-e^{-\lambda}|=O\left(\frac{t}{n}\right)=O\left(\left(\frac{\lambda}{n}\right)^{\frac{1}{j+1}}\right),
\end{equation*}
where the implicit constant depends only on $j$ and $\epsilon$.
\end{theorem}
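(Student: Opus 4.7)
The event $\{T_j\ge t\}$ is equivalent to saying that, among the bottom $t$ cards $Z_1,\ldots,Z_t$, no type appears more than $j$ times. Setting
\begin{equation*}
W \;=\; \sum_{\alpha\in\binom{[t]}{j+1}} X_\alpha,\qquad X_\alpha \;=\; \mathbb I\{\text{cards at all positions in }\alpha\text{ are of the same type}\},
\end{equation*}
we have $\{T_j\ge t\}=\{W=0\}$, and it suffices to compare $\mathbb P(W=0)$ with $e^{-\lambda}$. By exchangeability,
\begin{equation*}
p := \mathbb P(X_\alpha=1) = \frac{(j+1)!\sum_i\binom{m_i}{j+1}}{N(N-1)\cdots(N-j)},
\end{equation*}
and a Taylor expansion of the falling factorials, together with the definitions $N=mn$ and $\beta_{j+1}=n^{-1}m^{-(j+1)}\sum_i\binom{m_i}{j+1}$, gives $\mathbb E W = \binom{t}{j+1}p = \lambda(1+O(t/n))$.

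The bulk of the argument is a Chen--Stein Poisson approximation for $W$. I would use a size-bias coupling $(W,W^s)$ built by the following rule: pick $\alpha^*\in\binom{[t]}{j+1}$ with probability $p/\lambda$, pick a type $i$ with probability $\binom{m_i}{j+1}/\sum_k\binom{m_k}{j+1}$, and modify the deck by an explicit transposition bringing $j+1$ cards of type $i$ into the positions of $\alpha^*$ (swapping them with whatever cards originally sat there). The resulting arrangement has the distribution of the deck conditioned on $X_{\alpha^*}=1$, so $W^s$ is the size-biased version of $W$. Since only the at most $2(j+1)$ positions touched by the swap contribute, the standard size-bias Stein inequality reduces the problem to bounding the expected number of monochromatic $(j+1)$-subsets $\beta$ that meet this swap region, both in the original and in the modified deck.

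The main obstacle is controlling these overlap contributions. Each such $\beta\ne\alpha^*$ sharing $s\in\{1,\ldots,j\}$ positions with $\alpha^*$ forces one type to supply $2(j+1)-s$ identical cards, contributing a probability proportional to $\sum_i\binom{m_i}{2(j+1)-s}/N^{(2(j+1)-s)}$. The hypotheses that $m\ge\epsilon m^*$ and that an $\epsilon$-fraction of cards belong to types of multiplicity at least $\min(2j+1,m^*)$ enter essentially here: they provide both a lower bound $\beta_{j+1}\gtrsim_\epsilon 1$ and matching upper bounds on $\sum_i\binom{m_i}{k}$ for $k\le 2j+1$ in terms of $nm^k$. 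These are exactly what is needed to show that each overlap contribution is of order $p\cdot(t/n)^{j+1-s}$, so summing over $s$ and $\beta$ gives $\mathbb E|W^s-W|=O(1+\lambda t/n)$. Combining with the $O(\lambda t/n)$ gap between $\mathbb E W$ and $\lambda$, the claimed error $O(t/n)$ follows.
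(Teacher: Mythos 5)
Your proposal follows essentially the same strategy as the paper: express $\{T_j\ge t\}=\{W=0\}$, show $\mathbb E W = \lambda(1+O(t/n))$, construct a size-bias coupling by resampling a monochromatic $(j+1)$-tuple into $\alpha^*$, and bound the Poisson-approximation error via Stein's method by controlling the overlap terms. The coupling you describe and the accounting for $s\ge 1$ overlaps match Definition~\ref{def: coupling} and Lemmas~\ref{lemma:sizebiasindicator}--\ref{lem: overlappingtimes}.

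There is, however, one point where your sketch glosses over a step that actually requires a separate argument: the case $s=|\alpha^*\cap\beta|=0$. You restrict the overlap analysis to $s\in\{1,\dotsc,j\}$, and your formula $\sum_i\binom{m_i}{2(j+1)-s}/N^{(2(j+1)-s)}$ (the probability that $\alpha^*$ and $\beta$ are monochromatic of the \emph{same} type) does not capture what happens when $\beta$ is disjoint from $\alpha^*$. For disjoint $\beta$, the naive estimate $\mathbb P(Y_\beta^{\alpha^*}\neq Y_\beta)\le \mathbb P(Y_\beta^{\alpha^*}=1)+\mathbb P(Y_\beta=1)=O(n^{-j})$ is \emph{not} strong enough: after multiplying by $\Theta(t^{j+1})$ disjoint $\beta$, $\Theta(t^{j+1})$ choices of $\alpha^*$, the factor $p=\Theta(n^{-j})$, and the Stein prefactor $\frac{1-e^{-\lambda}}{\lambda}$, one gets something of order $\min(\lambda,\lambda^2)$, which is useless. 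The needed extra factor of $1/n$ comes from observing that $Y_\beta^{\alpha^*}\neq Y_\beta$ forces the (random) tuple $\bold s^*$ of resampled positions to intersect $\beta$, and then carefully decoupling that event from the label $I$; this is exactly the content of the $h=0$ case of Lemma~\ref{lemma:sizebiasindicator}. Your phrase ``meets this swap region'' shows you have the right picture, but without making this argument explicit the proof does not close. (A smaller remark: the quantity you should bound is $\mathbb E|W^s-W-1|=O(t/n)$ rather than $\mathbb E|W^s-W|=O(1+\lambda t/n)$; as written, plugging your bound into the standard size-bias Stein inequality would leave an extra factor of $\lambda$ and would not give $O(t/n)$ for $\lambda\gg 1$.)
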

\begin{remark}
As in Theorem \ref{thm: best case, weakversion}, decks of type $\bold m=m\bold 1_n$ satisfy the hypothesis of Theorem \ref{thm: Tj approx even univariate} with $\epsilon=1$. For general decks, the assumption $\epsilon>0$ prevents one card type from dominating the others.
\end{remark}
For $1\leq j\leq m^*-1$ and $1\leq t\leq N$, define random variables
\begin{equation}\label{def: jtuple}
    W_j(t):=|\left\{1\leq s_1<\ldots<s_{j+1}\leq t: Z_{s_1}=\ldots=Z_{s_{j+1}}\right\}|.
\end{equation}
In words, $W_j(t)$ denotes the number of $(j+1)$-tuples of cards with the same type appearing before time $t$. Directly from the definition,
\begin{align*}
    \{T_j\geq t\}=\{W_j(t)=0\},
\end{align*}
so that Theorem \ref{thm: Tj approx even univariate} is an immediate corollary of the following Poisson approximation result for $W_j(t)$. We let $d_{TV}$ denote the \emph{total variation distance} between two probability measures.
\begin{theorem}
\label{thm: univariatebirthday}
Consider a deck with multiplicities $\bold m$, and fix $j$ with $1\leq j\leq m^*-1$ and $t$ with $1\leq t\leq N$. Assume that for some $\epsilon>0$, we have $m\geq \epsilon m^*$, and the fraction of cards of types that appear with multiplicity at least $\min(2j+1,m^*)$ is at least $\epsilon$. Let
\begin{equation*}
    \lambda=\frac{t^{j+1}}{n^j}\beta_{j+1}.
\end{equation*}
Let $P$ be a Poisson random variable of mean $\lambda$. Then
\begin{align*}
    d_{TV}(W_j(t),P)=O\left(\frac{t}{n}\right),
\end{align*}
where the implicit constant depends only on $j$ and $\epsilon$.
\end{theorem}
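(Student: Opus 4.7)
The plan is to apply the Chen--Stein method for Poisson approximation. Write $W_j(t) = \sum_{(i,A)} X_{(i,A)}$, where the sum ranges over pairs $\alpha = (i, A)$ with $i \in \{1, \dots, n\}$ and $A$ a $(j+1)$-subset of the $m_i$ cards of type $i$, and $X_{(i,A)}$ is the indicator that the $j+1$ cards indexed by $A$ all lie in positions $\{1, \dots, t\}$ of the uniform random shuffle. Because the positions of any fixed $j+1$ cards form a uniform random $(j+1)$-subset of $[N]$, one has $E[X_{(i,A)}] = \binom{t}{j+1}/\binom{N}{j+1}$, and summation gives $E[W_j(t)] = \lambda + O(\lambda/t)$. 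In the regime $t \lesssim n$ (in which the target $O(t/n)$ bound is nontrivial), this error is $O((t/n)^j) \leq O(t/n)$, and is absorbed through the elementary estimate $d_{TV}(\mathrm{Poisson}(\mu_1), \mathrm{Poisson}(\mu_2)) \leq |\mu_1 - \mu_2|$.

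For the Chen--Stein step, for each $\alpha = (i, A)$ I would take the dependency neighborhood $B_\alpha = \{(i, A') : A \cap A' \neq \emptyset\}$ of same-type subsets sharing at least one card with $A$. The local-dependency formulation (Arratia--Goldstein--Gordon, or equivalently Barbour--Holst--Janson) then produces a bound built from the close-range sums $b_1 = \sum_\alpha \sum_{\beta \in B_\alpha} E[X_\alpha] E[X_\beta]$ and $b_2 = \sum_\alpha \sum_{\alpha \neq \beta \in B_\alpha} E[X_\alpha X_\beta]$, together with the far-field term $b_3 = \sum_\alpha E|E[X_\alpha \mid \mathcal G_\alpha] - E[X_\alpha]|$, where $\mathcal G_\alpha = \sigma(X_\beta : \beta \notin B_\alpha)$. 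The close-range sums I would handle by direct enumeration: stratify $\beta \in B_\alpha$ by the overlap $k = |A \cap A'| \in \{1, \dots, j+1\}$, use the identity $E[X_\alpha X_\beta] = \binom{t}{2(j+1)-k}/\binom{N}{2(j+1)-k}$, and invoke $|B_\alpha| \leq (j+1)\binom{m^*}{j}$ together with the balance hypothesis $m \geq \epsilon m^*$. A careful count then produces $b_1 + b_2 = O(\lambda \cdot t/n)$, contributing $O(t/n)$ to $d_{TV}$ after the usual $\lambda^{-1}$ factor.

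The main obstacle is controlling the far-field term $b_3$, which is nonzero precisely because of sampling without replacement: knowing the values of the $X_\beta$ for subsets $\beta$ disjoint from $A$ places a mild but genuine restriction on the positions available to the cards of $A$. My plan is to construct an explicit coupling of the uniform random shuffle $\sigma$ with a shuffle $\tilde \sigma$ whose conditional distribution given $\mathcal G_\alpha$ matches the required law, in such a way that $\sigma$ and $\tilde \sigma$ differ in only $O(1)$ positions. A natural construction uses the observation that $\mathcal G_\alpha$ is a coarsening of the $\sigma$-algebra generated by the positions of all cards outside $A$; conditioning on this richer information completely determines the positions of the cards in $A$, and any discrepancy between different realisations can be reconciled by a bounded number of targeted transpositions. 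Since each transposition alters $X_\alpha$ only when it moves a position across the boundary of $\{1, \dots, t\}$, an event of probability $O(t/N)$, this produces a pointwise estimate of the form $|E[X_\alpha \mid \mathcal G_\alpha] - E[X_\alpha]| = O(E[X_\alpha] \cdot t/N)$ on average, which upon summation gives the desired contribution to $d_{TV}$. The delicate part is implementing the coupling so that it respects the coarseness of $\mathcal G_\alpha$ rather than the easier $\sigma$-algebra of positions outside $A$, and carefully tracking the dependence on $\lambda$ so as to close the bound $d_{TV}(W_j(t), \mathrm{Poisson}(\lambda)) = O(t/n)$ uniformly in the regimes of interest.
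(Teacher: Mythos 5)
Your decomposition of $W_j(t)$ by card-subsets rather than position-subsets is a valid dual to what the paper does, and your first two paragraphs ($E[W_j(t)]$ matching $\lambda$ up to $O(t/n)$, and the $b_1+b_2$ estimates) are sound and would give the right contribution. The genuine gap is in the plan for $b_3$, and it has two parts.

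First, even granting the estimate $b_3 = O\left(\lambda \cdot (t/N)^j\right)$ (or even the weaker claim you make), the Arratia--Goldstein--Gordon local-dependence bound you invoke multiplies $b_3$ by $\min(1, 1.4\lambda^{-1/2})$, not by $\lambda^{-1}$; only the $b_1+b_2$ part enjoys the $(1-e^{-\lambda})/\lambda$ prefactor. Since $\lambda$ can be as large as $n^{1/(j+1)}$ over the range $t\leq n$, the $b_3$ contribution to $d_{TV}$ is $O\left(\sqrt{\lambda}\,(t/N)^j\right)$, which for $j=1$ is only $O(t/n)$ in the restricted range $t\lesssim\sqrt{n}$. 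This is not a cosmetic issue: the paper explicitly chooses Theorem 2.A of Barbour--Holst--Janson over the $b_1,b_2,b_3$ formulation precisely because, as they say, the application to the worst-strategy bound ``requires better control of the marginals when $\mathbb{E}[W_j]$ is large,'' i.e.\ the full $\lambda^{-1}$ decay.

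Second, the coupling you propose to bound $b_3$ is substantially harder than what you would need under the Barbour--Holst--Janson formulation, and your sketch elides the hard part. You need to match the unconditional law against the law conditioned on $\mathcal{G}_\alpha$, the $\sigma$-algebra generated by \emph{all} far-away indicators; but the object one can reasonably construct by ``a bounded number of targeted transpositions'' is a deck conditioned on the single event $\{X_\alpha=1\}$. Those are not the same coupling. The paper's resolution is exactly to reformulate the problem so that only the one-event coupling is needed: in Theorem 2.A one must produce, for each pair, a variable $Y_{\mathbf{s}'}^{\mathbf{s}}$ with the law of $Y_{\mathbf{s}'}$ given $Y_{\mathbf{s}}=1$, and the paper builds this explicitly (Definition \ref{def: coupling}, Lemma \ref{lem:sizebias}) by choosing a random type, a random $(j+1)$-tuple of its occurrences, and swapping it into the positions $\mathbf{s}$, so that the original and coupled decks agree outside at most $2j+2$ positions. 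That formulation sidesteps $b_3$ entirely, gains the $\lambda^{-1}$ factor, and as a bonus the same coupling is reused in Lemma \ref{lem: Tj tail bound} to get exponential tail bounds via size-bias concentration. If you switch your argument to this BHJ formulation (keeping your card-subset decomposition if you prefer), the rest of your plan goes through; as written, however, the $b_3$ route does not reach the stated $O(t/n)$ bound.
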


Theorem \ref{thm: univariatebirthday} is established using a version of Stein's method for Poisson approximation due to Barbour, Holst and Janson \cite{barbour1992poisson}. As part of the proof, we construct certain couplings of the random deck related to size-bias couplings.

\begin{remark}
While this univariate approximation suffices for our application, the multivariate analogue is no harder and so we also establish this, see Theorem \ref{thm: birthday}. We note that the multivariate analogue has a slightly worse error bound and so is not a strict generalization of the univariate counterpart. In particular, the application to Theorem \ref{thm: worst case} requires the stronger bound.
\end{remark}

\subsection{Card guessing games}
The complete feedback game considered in this paper is an example of a more general collection of partial feedback models for card guessing. These models were considered in \cite{Blackwell1957} and \cite{EFRON1971}, motivated by the study of clinical trials, and in \cite{Diaconis1978}, motivated by tests for extrasensory perception. 

In \cite{DG81}, the mean for the number of correct guesses in the complete feedback model is computed when the number of distinct types is small and the number of cards of each type is large. In particular, in the case of an even deck where $\bold m=m\bold 1_n$, $\mathbb{E}[S^{\pm}_{\bold m}]=m\pm M_n\sqrt{m}+o(\sqrt{m})$, where $M_n$ is some constant depending on $n$. 
The regime studied in this paper, where $n$ is large and $m$ is bounded, was also considered in \cite{DG81} when $m=1$. For general $m$, it was recently studied in \cite{diaconis2020card}, where it was shown that for even decks where $\bold m=m\bold 1_n$,
\begin{equation*}
    \mathbb{E}[S_{\bold m}^+]=(1+o(1))H_m\ln n
\end{equation*}
and
\begin{equation*}
    \mathbb{E}[S_{\bold m}^-]=\Theta\left(n^{-\frac{1}{m}}\right).
\end{equation*}
Our work sharpens both of these results. For the best strategy, numerical simulations show that our approximation performs significantly better, and for small $m$, is good even when $n$ is small.

Uneven decks have also been considered previously in the literature when $m=2$. The mean and limiting distribution for a deck with two card types and possibly different numbers of each was studied in \cite{DG81}, and see also \cite{P91}. Later, in \cite{KP01} and \cite{KPP09}, exact formulas for the distribution of correct guesses was found which gave a complete picture of the limiting distributions as the number of cards of each type varies.

Other versions of the partial feedback model have also been considered. The case of yes-no feedback, where the guesser is only told whether the guess is correct or not, was studied in \cite{DG81}, where it was shown that the greedy strategy is not optimal, and more recently in \cite{diaconis2020card, diaconis2020guessing}. 

One can also study these problems when the deck is not uniformly random, see \cite{C98, P09, L21, KT21}. Recent work of Spiro also studies the case when the deck is shuffled adversarially \cite{S21}.

\subsection{The birthday problem for sampling without replacement}
The classical birthday problem, introduced by von Mises \cite{von1939aufteilungs}, is one of the most well-known and somewhat counter-intuitive facts in elementary probability theory: in a group of $23$ people, it is about fair odds to observe two of them sharing their birthday. 

There have been countless generalizations in various directions, all sharing the following set-up: some discrete process $t\rightarrow \bold H(t)\in \mathbb N^n$ (here $\bold H(t)$ should be thought of as a vector whose $i$th component counts the number of occurrences of $i$) is given and one tries to understand limiting results (for $n$ large) for various features of the top order statistics of the process. These problems have also been studied as urn models.

For instance, the original birthday problem seeks to estimate $\mathbb P(\max \bold H(t)\geq 2)$ in the case where $t\rightarrow \bold H(t)$ is obtained by sampling \emph{without} replacement from a balanced deck with $n$ distinct types (which can be thought as birthdays when $n=365$). In this case, $\bold H(t)$ is distributed according to a symmetric multinomial at each step.

Some generalizations include the study of $\mathbb P(\max \bold H(t)\geq j)$ \cite{Klamkin1967, Holst1985,Holst1986OnBC} (how long before $j+1$ people share the same birthday?), allowing other models for $\bold H(t)$ such as non-symmetric multinomials \cite{Holst1995, Diaconis1989} (unbalanced decks) or mixtures of multinomials \cite{diaconis2002bayesian}(decks with random composition). The case where the process is a function of some underlying graph is also of interest because of its connection with extremal combinatorics and graph colouring \cite{Bhattacharya2017, barbour1992poisson}. Stein's method has been applied to these types problems, see \cite{AGG89} for an application to the original birthday problem.

Our results can be thought of as an analogue where $t\rightarrow \bold H(t)$ consists of sampling without replacement from a deck of cards, i.e., $\bold H(t)$ is distributed as a hypergeometric random variable at all times. Theorem \ref{thm: Tj approx even univariate} states that while sampling without replacement increases the time at which a birthday coincidence occurs -- e.g., for $m=2$ and $n=365$ fair odds of observing two cards with the same type occurs after about $32$ cards have been sampled -- it does not change the scaling with $n$ in the problem (e.g., the first coincidence appears after about $\sqrt n$ samples). The rigidity of the scaling for the birthday problem is something observed also for sampling with replacement from decks with random compositions, see \cite{diaconis2002bayesian} for more details.

\subsection{Outline}
The rest of the paper is structured as follows. In Section \ref{sec: numerics}, we provide some comparisons of our approximations with numerical simulations. In Section \ref{sec: steinsmethod}, we state the version of Stein's method we use and construct the required coupling. In Section \ref{sec: birthday}, we prove results on the birthday problem for sampling without replacement, including the proof of Theorem \ref{thm: univariatebirthday}. Finally, in Sections \ref{sec: best strategy} and \ref{sec: worst strategy}, we apply Theorem \ref{thm: univariatebirthday} to prove Theorems \ref{thm: best case, weakversion} and \ref{thm: worst case} on the card guessing game under the best and worst strategies respectively.

\section{Numerical simulations}
\label{sec: numerics}
In this section, we present some numerical data, comparing the estimates given by Theorems \ref{thm: best case, weakversion} and \ref{thm: worst case} with empirical values. 

\subsection{Best strategy}
Consider an even deck with $\mathbf{m}=m\mathbf{1}_n$ (that is, all card types appear with multiplicity $m$). Numerical simulations show that for small $m$, Theorem \ref{thm: best case, weakversion} gives extremely accurate estimates, even for small values of $n$. 

Figure \ref{fig:max} compares the estimate
\begin{equation}
\label{eq: max estimate}
    \mathbb{E}[S_\mathbf{m}^+]\approx H_mH_n+\sum_{j=1}^m \ln \gamma_j
\end{equation}
with the empirical mean from numerical simulations, computed with 10,000 trials. Data is shown for $m=2,3,4,5,6$ and $n=2,\dotsc, 100$. For $m=2$ and $m=3$, the approximation \eqref{eq: max estimate} is almost indistinguishable from the numerically simulated means for all $n$. Even for $m=6$, the approximation \eqref{eq: max estimate} gives a reasonable estimate, with a relative error of about $2\%$ for $n>30$.

\begin{figure}
    \centering
    \includegraphics[scale=0.7]{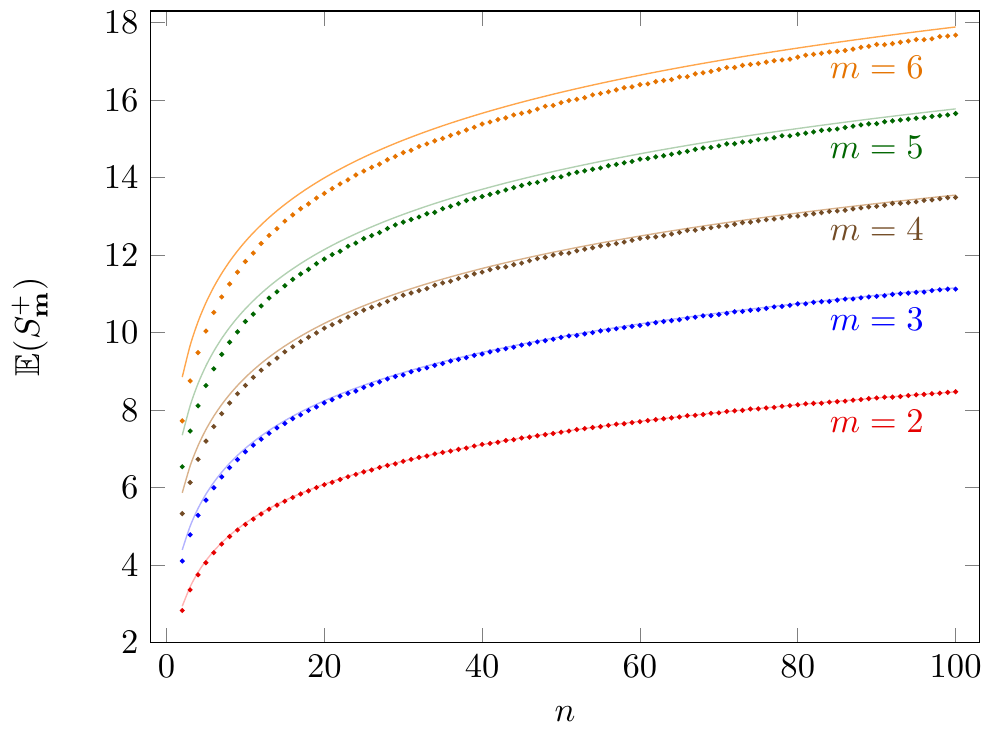}
    \caption{Comparison of approximation of $\mathbb{E}[S_\mathbf{m}^+]$ given by Theorem \ref{thm: best case, weakversion} with simulated mean (10,000 trials), for $\mathbf{m}=m\mathbf{1}_n$ with $m=2,\dotsc,6$, and $n=2,\dotsc, 100$. Dots represent the simulated means and solid lines represent $H_mH_n+\sum \ln\gamma_j$.}
    \label{fig:max}
\end{figure}

Our approximation is much better compared to the one obtained in \cite{diaconis2020card}, which only includes the leading term $H_m\ln n$. Figure \ref{fig:max comparison} shows a comparison of the approximation \eqref{eq: max estimate} with the simulated empirical means as well as the approximation $\mathbb{E}[S_\mathbf{m}^+]\approx H_m\ln n$. Data is shown for $m=4,5,6$ and $n=2,\dotsc, 100$. As expected from the fact that next largest term is of constant order, there is a significant improvement for small $n$. 

\begin{figure}
    \centering
    \includegraphics[scale=0.7]{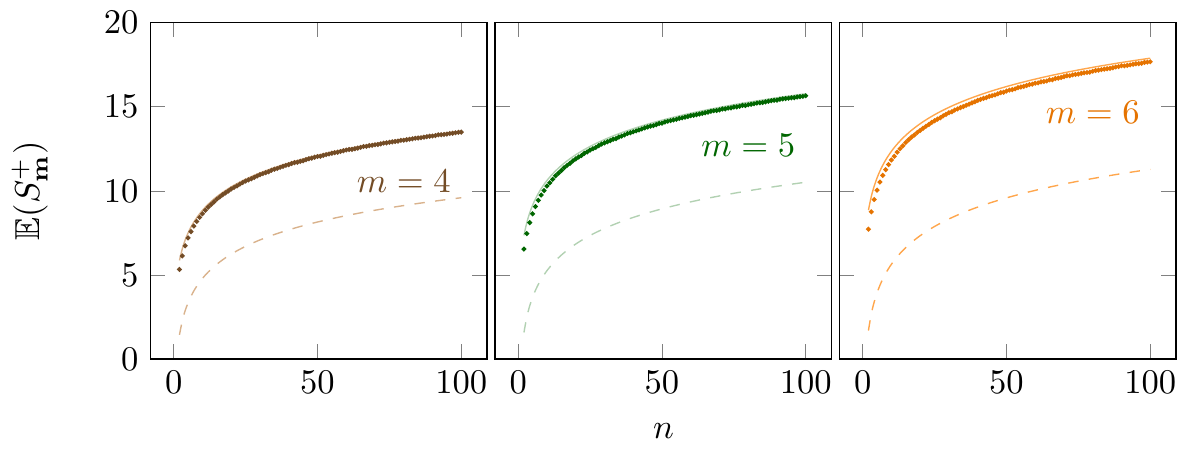}
    \caption{Comparison of approximation of $\mathbb{E}[S_\mathbf{m}^+]$ given by Theorem \ref{thm: best case, weakversion} and the approximation $H_m \ln n$ from \cite{diaconis2020card} with simulated mean (10,000 trials), for $\mathbf{m}=m\mathbf{1}_n$ with $m=4,5,6$, and $n=2,\dotsc, 100$. Dots represent the simulated means, solid lines represent $H_mH_n+m\ln m+\sum \frac{\ln \beta_j}{j}$, and dashed lines represent $H_m \ln n$.}
    \label{fig:max comparison}
\end{figure}

Finally, we also consider uneven decks. Consider a deck with $n$ types, half of which occur with multiplicity $m_1$ and half with multiplicity $m_2$. Figure \ref{fig:max uneven} compares the estimate from Theorem \ref{thm: best case, weakversion} with the empirical mean from numerical simulations, computed with 10,000 trials. Data is shown for $m_1,m_2=6,8,10,12$ and $n=2,4,\dotsc,100$. From the data, it can be seen that the error seems to depend on $\mathbf{m}$ mostly through $m^*$.

\begin{figure}
    \centering
    \includegraphics[scale=0.7]{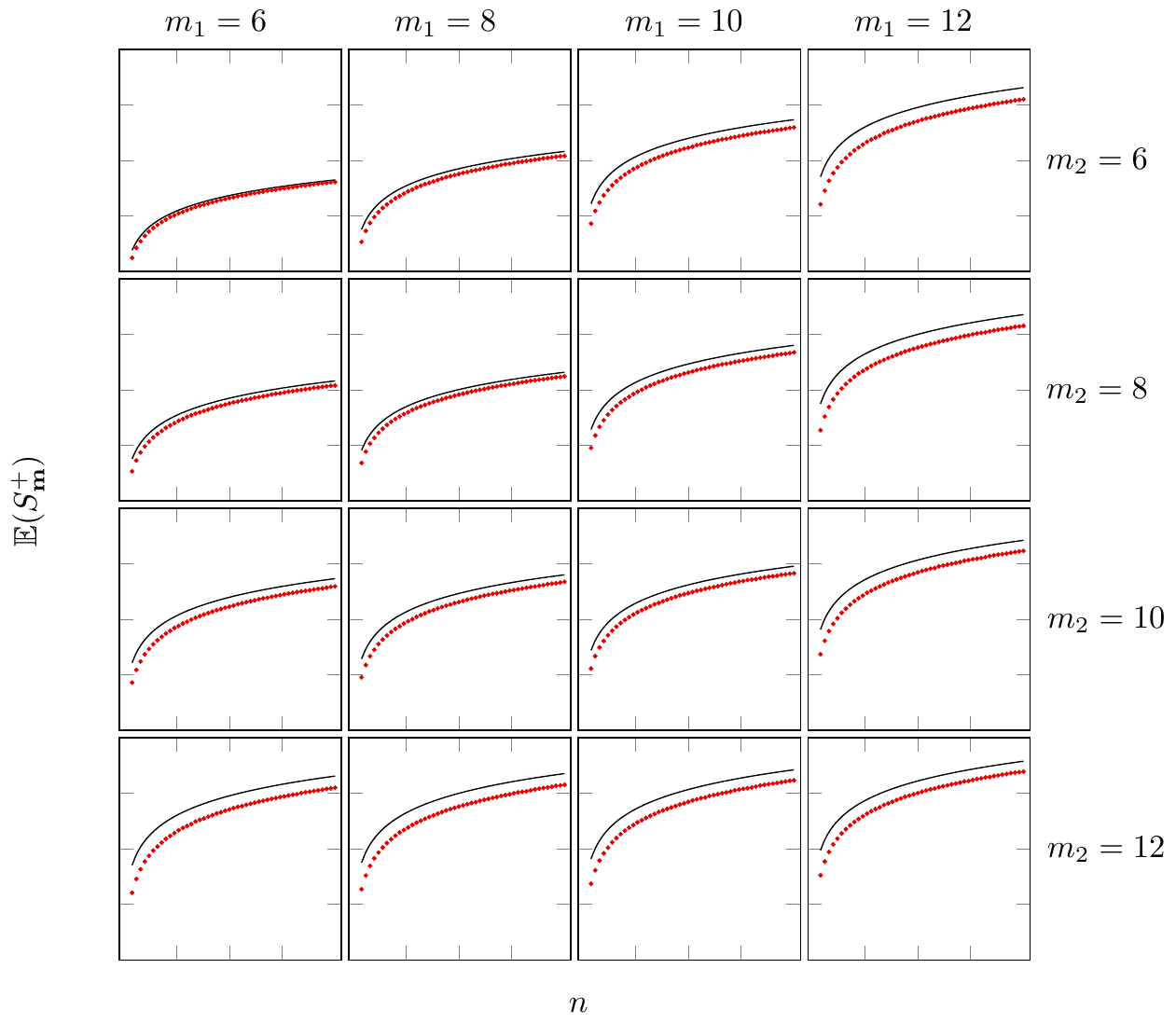}
    \caption{Comparison of approximation of $\mathbb{E}[S_\mathbf{m}^+]$ given by Theorem \ref{thm: best case, weakversion} with simulated mean (10,000 trials), for a deck with $n/2$ types of multiplicity $m_1$ and $n/2$ types of multiplicity $m_2$. Data is shown for $m_1,m_2=6, 8, 10, 12$, and $n=2,4,\dotsc, 100$. Dots represent the simulated means and solid lines represent $H_{m^*}H_n+\sum \ln \gamma_j$.}
    \label{fig:max uneven}
\end{figure}

\subsection{Worst strategy}
Consider an even deck with $\mathbf{m}=m\mathbf{1}_n$. Numerical simulations suggest that for $m=2,3$, Theorem \ref{thm: worst case} gives a good approximation for all reasonably large $n$ (say $n>30$), but that for larger $m$, the approximation deteriorates. This is to be expected from the nature of the error term in Theorem \ref{thm: worst case}.

Figure \ref{fig:min} compares the estimate
\begin{equation}
\label{eq: min estimate}
    \mathbb E[S^-_{\bold m}]\approx\sum_{j=\lfloor\frac{m}{2}\rfloor+1}^{m}\frac{\Gamma\left(\frac{j+1}{j}\right)}{\gamma_jn^{\frac{1}{j}}}
\end{equation}
with the empirical mean from numerical simulations, computed with 10,000 trials. Data is shown for $m=2,3,4,5,6$ and $n=10,20,\dotsc, 1000$. The approximation is good for $m=2$ and $m=3$ when $n$ is reasonably large. However, the approximation gets much worse for larger $m$. For $m=6$, the relative error is about $20\%$ even when $n=1000$. 

Table \ref{tab: min large n} shows the empirical mean from numerical simulations, computed with 10,000 trials, along with the approximation \eqref{eq: min estimate} and the relative error. Data is shown for $m=3,4$ and $n=10,000$, $50,000$, and $100,000$. The data shows that the error does decrease with $n$, albeit quite slowly even for $n=4$. This suggests that if good accuracy is desired, the approximation \eqref{eq: min estimate} is only useful for $m=2,3$.

\begin{figure}
    \centering
    \includegraphics[scale=0.7]{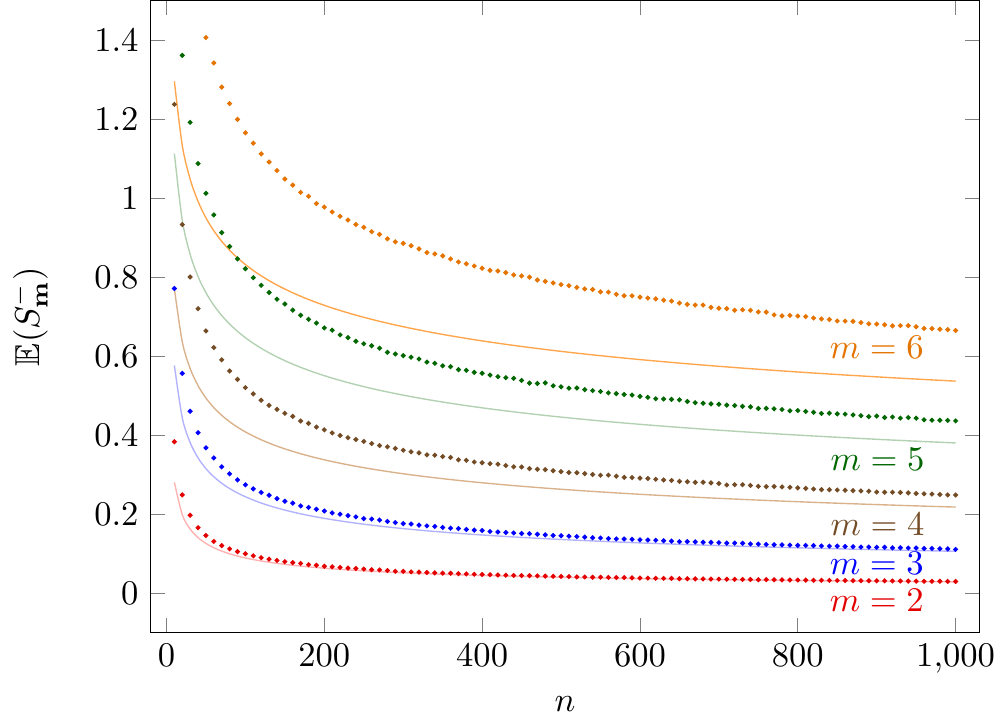}
    \caption{Comparison of approximation of $\mathbb{E}[S_\mathbf{m}^-]$ given by Theorem \ref{thm: worst case} with simulated mean (10,000 trials), for $\mathbf{m}=m\mathbf{1}_n$ with $m=2,\dotsc,6$, and $n=10, 20, \dotsc, 1000$. Dots represent the simulated means and solid lines represent $\sum_{j=\lfloor m/2\rfloor+1}^{m}\Gamma\left(\frac{j+1}{j}\right)\gamma_j^{-1}n^{-\frac{1}{j}}$.}
    \label{fig:min}
\end{figure}

\begin{table}[b]
    \centering
    \begin{tabular}{cl|rrr}
        & & \multicolumn{3}{c}{Number of card types ($=n$)}\\
        & & 10,000&50,000 & 100,000\\\hline
        \multirow{3}{*}{$m=3$}&Approximation \eqref{eq: min estimate}& 0.04657& 0.02653 & 0.02086\\
        & Empirical mean & 0.04729& 0.0269 & 0.02102\\
        & Relative error & 1.53\% & 1.30\%& 0.77\%\\\hline
        \multirow{3}{*}{$m=4$}&Approximation \eqref{eq: min estimate}& 0.11675 & 0.07588 & 0.06309\\
        & Empirical mean & 0.12531 & 0.07963 & 0.06572\\
        & Relative error & 6.83\% & 4.70\% & 4.00\%\\
    \end{tabular}
    \caption{Comparison of approximation of $\mathbb{E}[S_\mathbf{m}^-]$ given by Theorem \ref{thm: worst case} with simulated mean (10,000 trials), for $\mathbf{m}=m\mathbf{1}_n$ with $m=3,4$ and $n=10,000$, $50,000$ and $100,000$.}
    \label{tab: min large n}
\end{table}

The proof of Theorem \ref{thm: worst case} actually gives terms in the sum \eqref{eq: min estimate} all the way to $j=1$, but these extra terms are dominated by the error term. That is, the proof suggests an approximation
\begin{equation}
    \label{eq: min estimate all terms}
    \mathbb E[S^-_{\bold m}]\approx\sum_{j=1}^{m}\frac{\Gamma\left(\frac{j+1}{j}\right)}{\gamma_jn^{\frac{1}{j}}}.
\end{equation}
From numerical simulations, it seems that while the extra terms do help, there is still a significant error that suggests the error term in Theorem \ref{thm: worst case} cannot be significantly improved.

Figure \ref{fig:min extra terms} shows the simulated empirical means compared to both approximations \eqref{eq: min estimate} and \eqref{eq: min estimate all terms}. Data is shown for $m=4,5,6$ and $n=10,20,\dotsc, 1000$. While it seems that these extra terms do help somewhat, it does not seem to affect the order of magnitude of the error. The fact that the extra terms help may be a coincidence due to the fact that the approximation \eqref{eq: min estimate} gives an underestimate and all extra terms are positive. The data suggests that at roughly order $n^{-\frac{2}{m}}$, there is an additional term missing from \eqref{eq: min estimate all terms}.

\begin{figure}
    \centering
    \includegraphics[scale=0.7]{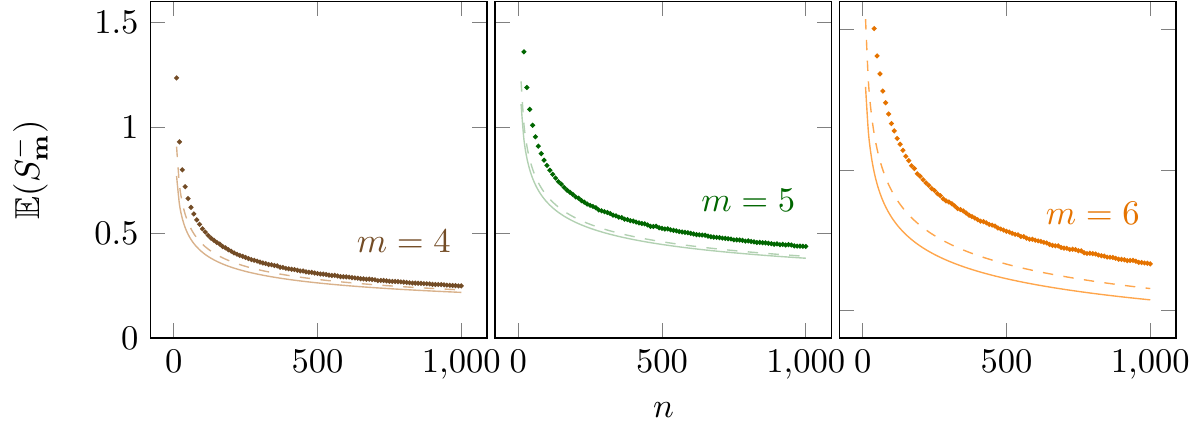}
    \caption{Comparison of approximation of $\mathbb{E}[S_\mathbf{m}^-]$ given by Theorem \ref{thm: worst case} and given by including extra terms from $j=1$ to $\lfloor m/2\rfloor$ with simulated mean (10,000 trials), for $\mathbf{m}=m\mathbf{1}_n$ with $m=2,\dotsc,6$, and $n=10, 20, \dotsc, 1000$. Dots represent the simulated means, solid lines represent $\sum_{j=\lfloor m/2\rfloor+1}^{m}\Gamma\left(\frac{j+1}{j}\right)m^{-1}\gamma_j^{-1}n^{-\frac{1}{j}}$, and dashed lines represent $\sum_{j=1}^{m}\Gamma\left(\frac{j+1}{j}\right)\gamma_j^{-1}n^{-\frac{1}{j}}$.}
    \label{fig:min extra terms}
\end{figure}

\section{Stein's method for Poisson approximation}
\label{sec: steinsmethod}

In this section, we introduce the version of Stein's method for Poisson approximation that is needed as well as its extension to the multivariate setting. The method relies on the construction of certain couplings which are also given in this section.

\subsection{Stein's method}
Initially introduced by Stein \cite{Stein1972ABF} in order to prove normal approximation results, Stein's techniques were developed by Chen \cite{Chen1975} to obtain analogous results for Poisson approximation. For an overview behind the general philosophy of Stein's methods, as well as a large class of examples, we refer the reader to the surveys \cite{Chatterjee2005, Ross2011}. 
We will rely on the following result on multivariate Poisson approximation in order to prove Theorem \ref{thm: birthday}.
\begin{theorem}[{\hspace{1sp}\cite[Theorem 10.J]{barbour1992poisson}}]
\label{thm: multivariate poisson approx}
Let $\Gamma$ be a finite set admitting a partition $\Gamma=\sqcup_{j=1}^k \Gamma_j$. Let $(Y_{\bold s})_{\bold s\in \Gamma}$ be a collection of Bernoulli random variables. Suppose that for each $\bold s, \bold s'\in \Gamma$, $\bold s\neq \bold s'$, we can construct a coupling $Y_{\bold s'}^{\bold s}$ such that the law of the $Y_{\bold s'}^{\bold s}$ is the same as the law of the $Y_{\bold s'}$ conditioned on $Y_{\bold s}=1$. Define $W_1,\ldots, W_k$ by $W_j=\sum _{\bold s\in \Gamma_j}Y_I$. Let $P_1,\dotsc, P_k$ be independent Poisson random variables, with $\mathbb E[P_j]=\mathbb E[W_j]$. Then
\begin{equation*}
    d_{TV}((W_1,\ldots, W_k),(P_1,\ldots, P_k)) \leq \sum_{\bold s\in \Gamma}\left(\mathbb{P}(Y_{\bold s}=1)^2+\mathbb{P}(Y_{\bold s}=1)\sum_{\bold s'\neq \bold s}\mathbb P(Y_{\bold s'}^{\bold s}\neq Y_{\bold s'})\right).
\end{equation*}
\end{theorem}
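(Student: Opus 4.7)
The plan is to apply Stein's method for multivariate Poisson approximation in the local coupling form of Barbour, Holst, and Janson. The argument has three ingredients: a Stein equation characterizing the target distribution, a decomposition of the expectation using the hypothesized couplings, and sharp bounds on second differences of the Stein solution.

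First, I would invoke the generator characterization: the target distribution $\otimes_{j=1}^k\mathrm{Poi}(\lambda_j)$ is the unique stationary measure for the multivariate immigration-death process on $\mathbb N^k$ with generator
\begin{equation*}
    (\mathcal A g)(\mathbf n) = \sum_{j=1}^k\left[\lambda_j(g(\mathbf n+e_j)-g(\mathbf n)) + n_j(g(\mathbf n-e_j)-g(\mathbf n))\right].
\end{equation*}
For each event $A\subseteq\mathbb N^k$, the Stein equation $\mathcal A g_A=\mathbf 1_A-\mathbb P(\mathbf P\in A)$ admits a bounded solution via $g_A(\mathbf n)=-\int_0^\infty(P_t\mathbf 1_A(\mathbf n)-\mathbb P(\mathbf P\in A))\,dt$, where $P_t$ is the semigroup. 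Since this process decouples into independent birth-death chains in each coordinate, a direct coupling of two copies started at configurations differing by one unit yields the second-difference bound $\|\Delta_{j'}\Delta_j g_A\|_\infty\le 1$ uniformly in $A$, $j$, $j'$, with $\Delta_j g(\mathbf n):=g(\mathbf n+e_j)-g(\mathbf n)$. Because $d_{TV}=\sup_A|\mathbb E[(\mathcal A g_A)(\mathbf W)]|$, it suffices to control this supremum.

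Next, I would unfold $\mathbb E[(\mathcal A g)(\mathbf W)]$ using the couplings. Writing $p_{\mathbf s}=\mathbb P(Y_{\mathbf s}=1)$ and $j(\mathbf s)$ for the index of the class containing $\mathbf s$, the identity $\mathbb E[Y_{\mathbf s}h(\mathbf W)]=p_{\mathbf s}\mathbb E[h(\mathbf W^{\mathbf s})]$ (where $\mathbf W^{\mathbf s}$ is the vector built from the $Y^{\mathbf s}_{\mathbf s'}$, with $Y^{\mathbf s}_{\mathbf s}\equiv 1$) combined with $\lambda_j=\sum_{\mathbf s\in\Gamma_j}p_{\mathbf s}$ yields
\begin{equation*}
    \mathbb E[(\mathcal A g)(\mathbf W)] = \sum_{\mathbf s\in\Gamma}p_{\mathbf s}\,\mathbb E\bigl[\Delta_{j(\mathbf s)}g(\mathbf W)-\Delta_{j(\mathbf s)}g(\tilde{\mathbf W}^{\mathbf s})\bigr],
\end{equation*}
where $\tilde{\mathbf W}^{\mathbf s}:=\mathbf W^{\mathbf s}-e_{j(\mathbf s)}$ is the vector obtained from the $Y^{\mathbf s}_{\mathbf s'}$ with $\mathbf s'\ne\mathbf s$. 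Telescoping the bracket along any lattice path from $\tilde{\mathbf W}^{\mathbf s}$ to $\mathbf W$ and applying the second-difference bound produces
\begin{equation*}
    \bigl|\mathbb E[\Delta_{j(\mathbf s)}g(\mathbf W)-\Delta_{j(\mathbf s)}g(\tilde{\mathbf W}^{\mathbf s})]\bigr|\le \mathbb E\bigl[|\mathbf W-\tilde{\mathbf W}^{\mathbf s}|_1\bigr]\le p_{\mathbf s}+\sum_{\mathbf s'\neq\mathbf s}\mathbb P(Y^{\mathbf s}_{\mathbf s'}\ne Y_{\mathbf s'}),
\end{equation*}
since $|\mathbf W-\tilde{\mathbf W}^{\mathbf s}|_1\le Y_{\mathbf s}+\sum_{\mathbf s'\ne\mathbf s}|Y_{\mathbf s'}-Y^{\mathbf s}_{\mathbf s'}|$ by construction. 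Multiplying by $p_{\mathbf s}$ and summing over $\mathbf s\in\Gamma$ yields the claimed bound.

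The main obstacle is the second-difference Stein factor $\|\Delta_{j'}\Delta_j g_A\|_\infty\le 1$, which is the core analytic input from BHJ; it is what allows one to replace coordinate mismatches by $\ell^1$ distances in the arguments of $g$. In the univariate case this admits the familiar sharpening to $(1-e^{-\lambda})/\lambda$, but in the multivariate setting one only obtains the bare bound $1$, which is why the theorem carries no Stein factors in its conclusion. Given this bound, the remaining manipulations are routine telescoping and triangle inequalities together with the definition of the size-biased coupling.
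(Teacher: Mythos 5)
The paper states this result as a direct citation of Theorem 10.J of Barbour, Holst and Janson and does not prove it, so there is no internal argument to compare against; your task was essentially to reconstruct the BHJ proof, and you have done so correctly. The generator characterization via the multivariate immigration-death semigroup, the identity $\mathbb E[Y_{\mathbf s}h(\mathbf W)]=p_{\mathbf s}\mathbb E[h(\mathbf W^{\mathbf s})]$ applied to $\mathcal A g$, the cancellation with $\lambda_j=\sum_{\mathbf s\in\Gamma_j}p_{\mathbf s}$ yielding the difference $\mathbb E[\Delta_{j(\mathbf s)}g(\mathbf W)-\Delta_{j(\mathbf s)}g(\tilde{\mathbf W}^{\mathbf s})]$, and the lattice-path telescope bounded by $\mathbb E[|\mathbf W-\tilde{\mathbf W}^{\mathbf s}|_1]\le p_{\mathbf s}+\sum_{\mathbf s'\ne\mathbf s}\mathbb P(Y_{\mathbf s'}^{\mathbf s}\neq Y_{\mathbf s'})$ are all exactly right and assemble to the stated bound.

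The only soft spot is your justification of the Stein factor $\|\Delta_{j'}\Delta_j g_A\|_\infty\le 1$. A coupling of two copies of the chain started one unit apart is the standard device for a \emph{first}-difference bound $\|\Delta_j g_A\|_\infty\le 1$; the \emph{second} difference $\Delta_{j'}\Delta_j g_A$ needs a four-path coupling or, more cleanly, the explicit analysis of the decoupled product semigroup as in BHJ's Lemma 10.1.3. You correctly flag this as the imported analytic input rather than something you derive, so this is a presentational rather than logical gap, but the one-line coupling heuristic as written does not by itself deliver the uniform bound on the cross second difference.
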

Our results about card guessing require better control of the marginals when $\mathbb E[W_j]$ is large. We will thus make use of the following consequence of Theorem 2.A of \cite{barbour1992poisson} (see the discussion at the beginning of Section 2.1 of \cite{barbour1992poisson} and apply the triangle inequality)
\begin{theorem}[{\hspace{1sp}\cite[Theorem 2.A]{barbour1992poisson}}]\label{thm: univariate poisson approx}
Let $\Gamma$ be a finite set. Let $(Y_{\bold s})_{\bold s\in \Gamma}$ be a collection of Bernoulli random variables. Suppose that for each $\bold s, \bold s'\in \Gamma$, $\bold s\neq \bold s'$, we can construct a coupling $Y_{\bold s'}^{\bold s}$ such that the law of the $Y_{\bold s'}^{\bold s}$ is the same as the law of the $Y_{\bold s'}$ conditioned on $Y_{\bold s}=1$. Let $W=\sum _{\bold s\in \Gamma}Y_{\bold s}$, and let $P$ be a Poisson random variable, with $\mathbb E[P]=\mathbb E[W]$. Then
\begin{equation*}
    d_{TV}(W, P)\leq \frac{1-e^{-\mathbb E[W]}}{\mathbb E[W]}\sum_{\bold s\in \Gamma}\left(\mathbb{P}(Y_{\bold s}=1)^2+\mathbb{P}(Y_{\bold s}=1)\sum_{\bold s'\neq \bold s}\mathbb P(Y_{\bold s'}^{\bold s}\neq Y_{\bold s})\right).
\end{equation*}
\end{theorem}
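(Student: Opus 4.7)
The plan is to follow the classical Chen--Stein approach to Poisson approximation developed in \cite{barbour1992poisson}. The argument rests on three ingredients: Stein's equation, the hypothesized coupling, and a sharp estimate on the Stein solution. First I would recall that for each event $A$ (viewed as a subset of the nonnegative integers), Stein's equation
\[
\lambda g_A(k+1) - k g_A(k) = \mathbf{1}_A(k) - \mathbb{P}(\mathrm{Po}(\lambda) \in A), \qquad k \geq 0,
\]
admits a bounded solution $g_A$ satisfying $\|\Delta g_A\|_{\infty} := \sup_{k \geq 0}|g_A(k+1) - g_A(k)| \leq (1-e^{-\lambda})/\lambda$. This sharp estimate, obtained by writing $g_A$ as an explicit alternating sum against the Poisson weights, is precisely what produces the prefactor $(1-e^{-\lambda})/\lambda$ in the theorem, in contrast with the unit bound one uses in the multivariate setting of Theorem \ref{thm: multivariate poisson approx}.

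Substituting $k = W$ and taking expectations yields
\[
\mathbb{P}(W \in A) - \mathbb{P}(\mathrm{Po}(\lambda) \in A) = \mathbb{E}\bigl[\lambda g_A(W+1) - W g_A(W)\bigr].
\]
To control the right-hand side I would decompose $W = \sum_{\bold s} Y_{\bold s}$ and $\lambda = \sum_{\bold s}\mathbb{P}(Y_{\bold s}=1)$, and then use the defining property of the couplings to rewrite
\[
\mathbb{E}[Y_{\bold s}\, g_A(W)] = \mathbb{P}(Y_{\bold s}=1)\, \mathbb{E}\Bigl[g_A\Bigl(1 + \sum_{\bold s' \neq \bold s} Y_{\bold s'}^{\bold s}\Bigr)\Bigr].
\]
Subtracting termwise against $\lambda\, \mathbb{E}[g_A(W+1)] = \sum_{\bold s}\mathbb{P}(Y_{\bold s}=1)\,\mathbb{E}[g_A(W+1)]$ gives
\[
\mathbb{E}[\lambda g_A(W+1) - W g_A(W)] = \sum_{\bold s} \mathbb{P}(Y_{\bold s}=1)\, \mathbb{E}\Bigl[g_A(W+1) - g_A\Bigl(1 + \sum_{\bold s' \neq \bold s} Y_{\bold s'}^{\bold s}\Bigr)\Bigr].
\]

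To conclude, I would apply the Lipschitz-type bound $|g_A(a)-g_A(b)| \leq \|\Delta g_A\|_\infty\, |a-b|$ with $a = 1 + Y_{\bold s} + \sum_{\bold s' \neq \bold s} Y_{\bold s'}$ and $b = 1 + \sum_{\bold s' \neq \bold s} Y_{\bold s'}^{\bold s}$, invoke the triangle inequality $|a-b| \leq Y_{\bold s} + \sum_{\bold s' \neq \bold s}|Y_{\bold s'} - Y_{\bold s'}^{\bold s}|$, and take expectations to produce $\mathbb{P}(Y_{\bold s}=1) + \sum_{\bold s' \neq \bold s} \mathbb{P}(Y_{\bold s'}^{\bold s} \neq Y_{\bold s'})$ inside the $\bold s$-summand. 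Taking the supremum over $A$ converts the left-hand side into $d_{TV}(W,P)$ and yields exactly the stated bound. The one non-mechanical ingredient is the sharp estimate on $\|\Delta g_A\|_\infty$ itself, which is the content of Lemma 1.1.1 of \cite{barbour1992poisson}; once that is in hand, everything else is a direct coupling manipulation essentially identical in structure to the one behind Theorem \ref{thm: multivariate poisson approx}, with the improved prefactor simply propagating through unchanged.
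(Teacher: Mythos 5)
The paper does not prove this statement: it is cited verbatim as a consequence of Theorem 2.A of Barbour, Holst and Janson (together with the introductory discussion of their Section 2.1 and a triangle inequality), so there is no proof in the paper to compare against. Your direct proof is essentially the standard Chen--Stein coupling argument, and it is correct as far as it goes: you set up Stein's equation for the Poisson, use $\lambda = \mathbb{E}[W] = \sum_{\mathbf{s}} \mathbb{P}(Y_{\mathbf{s}}=1)$ and the coupling identity $\mathbb{E}[Y_{\mathbf{s}} g_A(W)] = \mathbb{P}(Y_{\mathbf{s}}=1)\,\mathbb{E}[g_A(1 + \sum_{\mathbf{s}'\neq \mathbf{s}} Y^{\mathbf{s}}_{\mathbf{s}'})]$ to telescope $\mathbb{E}[\lambda g_A(W+1) - W g_A(W)]$ into a sum of increments of $g_A$, then apply the magic-factor bound $\|\Delta g_A\|_\infty \leq (1-e^{-\lambda})/\lambda$. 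That is the proof BHJ themselves give, so this is a legitimate derivation from first principles rather than from the more general packaged result the paper appeals to.

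Two small remarks. First, your write-up implicitly corrects what is presumably a typo in the statement as displayed: the summand should read $\mathbb{P}(Y_{\mathbf{s}'}^{\mathbf{s}} \neq Y_{\mathbf{s}'})$ rather than $\mathbb{P}(Y_{\mathbf{s}'}^{\mathbf{s}} \neq Y_{\mathbf{s}})$, and indeed the paper uses the former in its application inside the proof of Theorem \ref{thm: univariatebirthday}. Second, the step $\mathbb{E}[Y_{\mathbf{s}} g_A(W)] = \mathbb{P}(Y_{\mathbf{s}}=1)\,\mathbb{E}[g_A(1 + \sum_{\mathbf{s}'\neq \mathbf{s}} Y^{\mathbf{s}}_{\mathbf{s}'})]$ needs the \emph{joint} law of $(Y^{\mathbf{s}}_{\mathbf{s}'})_{\mathbf{s}'\neq \mathbf{s}}$ to equal the joint law of $(Y_{\mathbf{s}'})_{\mathbf{s}'\neq \mathbf{s}}$ conditioned on $Y_{\mathbf{s}}=1$, not merely the one-dimensional marginals that the hypothesis as literally worded guarantees; this stronger joint coupling is what is meant and is what Definition \ref{def: coupling} actually produces, so your use of it is fine, but it is worth flagging that the Lipschitz step is applied to a \emph{difference of the two random sums defined on a common probability space}, which is exactly where the joint coupling is used.
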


Although not explicitly stated, both theorems are related to the notion of size-bias coupling: a random variable $W$ is close to a Poisson if $W^*\approx W+1$, where $W^*$ is the size-bias version of $W$. For more details, see \cite{arratia2019size}.

In order to apply these results, we need to write $W_j(t)$ -- defined in \eqref{def: jtuple} -- as a sum of indicators. Given $j\in\mathbb N$ and $t\in\{1,\ldots, N\}$, define
\begin{align*}
    \Gamma_j(t)=\{\bold s=(s_1,\ldots, s_{j+1}) : 1\leq s_1<\ldots< s_{j+1}\leq t\},
\end{align*}
so that $|\Gamma_j(t)|={t\choose j+1}$ (in particular, it is zero if $t\leq j$). For $\bold s\in \Gamma_j(t)$, define $Y_{\bold s}$ to be the indicator of the event that all cards at times indexed by $\bold s$ are of the same type, i.e., 
\begin{align*}
    Y_{\bold s}=1 \Leftrightarrow Z_{s_1}=\ldots=Z_{s_{j+1}}.
\end{align*}
Directly from the definition,
\begin{align*}
    W_j(t)=\sum_{\bold s\in \Gamma_j(t)}Y_{\bold s}, 
\end{align*}
which is zero if and only if by time $t$ no type has appeared more than $j$ times as we desired. More precisely,
\begin{align*}
    \{T_1\geq t_1,\ldots, T_k\geq t_k\}=\{W_1(t_1)=0,\ldots, W_k(t_k)=0\},
\end{align*}
which allows us to tackle the birthday problem via Poisson approximation. 
\subsection{A general coupling}
Since the $Y_{\bold s}$'s are functions of the deck $\mathbf{Z}$, we instead construct a coupling of the original deck with a new deck $\mathbf{Z}^{\bold s}$ such that the law of $\mathbf{Z}^{\bold s}$ is equal to the law of $\mathbf{Z}$ conditioned on $Y_{\bold s}=1$. 

\begin{definition}
\label{def: coupling}
Let $\bold Z$ be a uniformly random deck of size $N$, and let $\bold s\subseteq \{1,\dotsc, N\}$ be a collection of indices with $|\mathbf{s}|\leq m^*$. We define a random deck $\mathbf{Z}^\mathbf{s}$ coupled with $\mathbf{Z}$ by the following procedure.
\begin{enumerate}
    \item Pick a random card of type $I$ with probabilities
    \begin{align*}
    \mathbb P(I=i)=\mathbb P(Z_{s_1}=i|Z_{s_1}=\ldots=Z_{s_{j+1}}).
    \end{align*}
    This is equivalent to uniformly sampling a card from the cards with multiplicity at least $|\mathbf{s}|$, and then taking $I$ to be the type of that card.
    \item Conditioned on $I=i$, pick a uniformly random $(j+1)$-tuple of times $\bold s^*$ corresponding to cards of type $i$. 
    \item Place the cards at positions $\bold s\setminus \bold s^*$ into positions $\bold s^*\setminus \bold s$, keeping the order the same. Then place the cards that were at positions $\bold s^*\setminus \bold s$ into positions $\bold s\setminus \bold s^*$ (here the order doesn't matter since the cards will all have the same type). Call the resulting deck $Z^{\bold s}$.
\end{enumerate}
We define the random variables $Y_\mathbf{s'}^\mathbf{s}$ as the indicator function that the cards in the deck $\mathbf{Z}^\mathbf{s}$ at times in $\mathbf{s}'$ are all of the same type.
\end{definition}

To show that this coupling satisfies the requirements of Theorem \ref{thm: univariate poisson approx}, we first establish the following lemma.

\begin{lemma}
\label{lem: change one card}
Suppose that $\mathbf{Z}$ is a uniformly random deck with $m_i$ cards of type $i$, for each $i$. If we uniformly pick a card of type $i_1$, and replace it with a card of type $i_2$, then we obtain a uniformly chosen deck containing $m_{i_1}-1$ cards of type $i_1$, $m_{i_2}+1$ cards of type $i_2$, and $m_i$ cards of type $i$ for all other $i$.
\end{lemma}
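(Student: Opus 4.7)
The plan is a direct counting argument. Fix any target deck $D'$ of length $N$ with multiplicities $\mathbf m' = (m_1, \ldots, m_{i_1} - 1, \ldots, m_{i_2}+1, \ldots, m_n)$, and let $\mathbf Z'$ denote the deck produced by the procedure. I will show that $\mathbb P(\mathbf Z' = D') = 1/\binom{N}{\mathbf m'}$, which is independent of $D'$, proving uniformity.

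First I parametrize the pre-images of $D'$. The procedure alters exactly one position, converting its card from type $i_1$ to $i_2$, so any pair $(D,p)$ (initial deck, position of the replaced card) mapping to $D'$ is determined by a position $p$ of $D'$ with $D'_p=i_2$: indeed $D$ must agree with $D'$ everywhere except at $p$, with $D_p=i_1$, and such a $D$ automatically has multiplicities $\mathbf m$. Since $D'$ contains $m_{i_2}+1$ cards of type $i_2$, there are exactly $m_{i_2}+1$ such pairs. Each such pair occurs with the same probability $\frac{1}{\binom{N}{\mathbf m}}\cdot\frac{1}{m_{i_1}}$, the first factor from the uniform choice of $D$ and the second from uniformly picking one of the $m_{i_1}$ cards of type $i_1$ in $D$. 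Summing over pairs gives
\begin{equation*}
\mathbb P(\mathbf Z' = D') \;=\; \frac{m_{i_2}+1}{m_{i_1}\binom{N}{\mathbf m}}.
\end{equation*}

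To conclude I would combine this with the standard multinomial identity
\begin{equation*}
\frac{\binom{N}{\mathbf m'}}{\binom{N}{\mathbf m}} \;=\; \frac{m_{i_1}!\,m_{i_2}!}{(m_{i_1}-1)!\,(m_{i_2}+1)!} \;=\; \frac{m_{i_1}}{m_{i_2}+1},
\end{equation*}
which yields $\mathbb P(\mathbf Z'=D') = 1/\binom{N}{\mathbf m'}$ as desired. There is no genuine obstacle; the only thing to watch is the bijection between pre-images of $D'$ and positions of $D'$ carrying type $i_2$, after which the proof is just the multinomial identity above.
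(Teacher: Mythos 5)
Your proof is correct and follows essentially the same approach as the paper's: both count the $m_{i_2}+1$ pre-image pairs of a fixed target deck and divide by $m_{i_1}\binom{N}{\mathbf m}$. You spell out the bijection with positions of $D'$ carrying type $i_2$ and the multinomial identity a bit more explicitly, but the argument is the same.
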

\begin{proof}
The chance of picking any particular card of type $i_1$ is $1/m_1$. For each possible final deck $\mathbf{Z}'$, there are $m_{i_2}+1$ initial configurations that could lead to $\mathbf{Z}'$. Thus, the probability of seeing $\mathbf{Z'}$ is
\begin{equation*}
    \left(\frac{m_{i_2}+1}{m_{i_1}}\right)\frac{\prod m_i!}{(\sum m_i)!},
\end{equation*}
which is uniform.
\end{proof}

Since the law of $\bold Z$ is exchangeable, the random subset of times $\bold s^*$ is uniformly distributed and independent of the choice of $I$. This can be used to prove the following.

\begin{lemma}\label{lem:sizebias}
Let $bold s\subset \{1,\ldots, N\}$ be a collection of indices with $|\bold s|\leq m^*$. Then, the law of $\bold Z^{\bold s}$ is equal to the law of $\mathbf{Z}$ conditioned on the event $Y_{\bold s}=1$.
\end{lemma}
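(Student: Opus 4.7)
My plan is to verify directly that $\mathbf{Z}^{\mathbf{s}}$ has the claimed distribution by computing its probability mass function at an arbitrary target deck $\mathbf{z}'$ and matching it against the conditional law. First I would fix $\mathbf{z}'$ with all cards at positions $\mathbf{s}$ of the same type $i$, and set $k=|\mathbf{s}|$. Since $\mathbf{Z}^{\mathbf{s}}$ always has its cards at $\mathbf{s}$ of type $I$, only the event $\{I=i\}$ contributes, and the computation splits as $\mathbb{P}(\mathbf{Z}^{\mathbf{s}}=\mathbf{z}')=\mathbb{P}(I=i)\,\mathbb{P}(\mathbf{Z}^{\mathbf{s}}=\mathbf{z}'\mid I=i)$, so it suffices to evaluate each factor separately.

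The key observation is that the swap in step (3) of Definition \ref{def: coupling} is an involution on decks: for any $k$-subset $\mathbf{s}^{*}\subseteq\{1,\ldots,N\}$, applying the swap to $\mathbf{z}'$ returns a unique deck $\mathbf{z}(\mathbf{s}^{*})$, which is exactly the preimage of $\mathbf{z}'$ under the coupling when $\mathbf{s}^{*}$ is also chosen at step (2). I would next verify that $\mathbf{s}^{*}$ is automatically a set of type-$i$ positions in $\mathbf{z}(\mathbf{s}^{*})$: positions in $\mathbf{s}^{*}\cap\mathbf{s}$ retain type $i$, while positions in $\mathbf{s}^{*}\setminus\mathbf{s}$ receive, under the inverse swap, the cards originally at $\mathbf{s}\setminus\mathbf{s}^{*}$ in $\mathbf{z}'$, which are of type $i$ because they lie in $\mathbf{s}$. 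This produces a bijection between coupling-preimages of $\mathbf{z}'$ and arbitrary $k$-subsets $\mathbf{s}^{*}\subseteq\{1,\ldots,N\}$, giving
\begin{equation*}
\mathbb{P}(\mathbf{Z}^{\mathbf{s}}=\mathbf{z}'\mid I=i)=\sum_{\mathbf{s}^{*}}\frac{1}{\binom{N}{m_1,\ldots,m_n}}\cdot\frac{1}{\binom{m_i}{k}}=\frac{\binom{N}{k}}{\binom{N}{m_1,\ldots,m_n}\binom{m_i}{k}}.
\end{equation*}

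I would then match this against the conditional uniform law using two elementary multinomial identities: $\mathbb{P}(\mathbf{Z}=\mathbf{z}'\mid Y_{\mathbf{s}}=1)=1/(\binom{N}{m_1,\ldots,m_n}\mathbb{P}(Y_{\mathbf{s}}=1))$, and $\mathbb{P}(Z_{s_1}=\cdots=Z_{s_k}=i)=\binom{m_i}{k}/\binom{N}{k}$, whence by Bayes $\mathbb{P}(I=i)=\binom{m_i}{k}/(\binom{N}{k}\,\mathbb{P}(Y_{\mathbf{s}}=1))$. Multiplying the two factors, the $\binom{m_i}{k}$ and $\binom{N}{k}$ terms cancel, producing the desired equality.

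The main obstacle will be verifying the preimage count carefully: one must confirm that every $k$-subset $\mathbf{s}^{*}$ of positions really gives a legitimate source in the coupling, i.e.\ that $\mathbf{s}^{*}\subseteq I_i(\mathbf{z}(\mathbf{s}^{*}))$, so that the sum has exactly $\binom{N}{k}$ equally-weighted terms (note that different $\mathbf{s}^{*}$'s may yield the same deck $\mathbf{z}(\mathbf{s}^{*})$, but the pairs $(\mathbf{z}(\mathbf{s}^{*}),\mathbf{s}^{*})$ remain distinct). This requires careful bookkeeping of how the swap moves cards among the subsets $\mathbf{s}\cap\mathbf{s}^{*}$, $\mathbf{s}\setminus\mathbf{s}^{*}$, and $\mathbf{s}^{*}\setminus\mathbf{s}$, and may be viewed as a multi-card generalization of Lemma \ref{lem: change one card}, which is essentially the case $k=1$.
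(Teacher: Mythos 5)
Your approach is correct but genuinely different from the paper's. The paper conditions on the chosen type $I=i$ and argues that, after pinning the cards at positions $\mathbf{s}$ to be of type $i$, the remaining deck in $\mathbf{Z}^\mathbf{s}$ is uniformly distributed over valid completions; it establishes this by fixing the original cards at positions $\mathbf{s}$ and repeatedly invoking Lemma~\ref{lem: change one card} (the one-card swap) to track uniformity. Your proof instead computes the probability mass function of $\mathbf{Z}^\mathbf{s}$ at an arbitrary target $\mathbf{z}'$ directly, by observing that the step-(3) swap is an involution in $\mathbf{z}$ for each fixed pair $(\mathbf{s},\mathbf{s}^*)$, so that the preimages of $\mathbf{z}'$ are in bijection with the $\binom{N}{k}$ choices of $\mathbf{s}^*$, each carrying weight $\binom{N}{m_1,\dotsc,m_n}^{-1}\binom{m_i}{k}^{-1}$. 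After combining with $\mathbb{P}(I=i)=\binom{m_i}{k}/\bigl(\binom{N}{k}\mathbb{P}(Y_\mathbf{s}=1)\bigr)$, the binomial factors cancel exactly against the conditional uniform law. The one point you flag as a possible obstacle --- that every $k$-subset $\mathbf{s}^*\subseteq\{1,\dotsc,N\}$ yields a legitimate source, i.e.\ that $\mathbf{s}^*$ consists of type-$i$ positions in $\mathbf{z}(\mathbf{s}^*)$ --- you do in fact resolve correctly: positions in $\mathbf{s}^*\cap\mathbf{s}$ are fixed by the swap and have type $i$ in $\mathbf{z}'$, while positions in $\mathbf{s}^*\setminus\mathbf{s}$ receive cards from $\mathbf{s}\setminus\mathbf{s}^*\subseteq\mathbf{s}$, which also have type $i$. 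Your argument is more self-contained (it never needs Lemma~\ref{lem: change one card} or an implicit induction) at the cost of an explicit multinomial bookkeeping; the paper's argument is shorter on the page but relies on that auxiliary lemma and a ``repeated application'' step that is left informal. The remark that your count is ``a multi-card generalization of Lemma~\ref{lem: change one card}'' is a loose analogy rather than a literal reduction, since that lemma concerns changing a card's type rather than transposing positions, but this does not affect the correctness of the proof.
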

\begin{proof}
We have
\begin{align*}
    \mathbb P(\bold Z=\cdot|Y_{\bold s}=1)=\sum_{i=1}^n\mathbb P(\bold Z=\cdot|Z_{s_1}=\ldots=Z_{s_{j+1}}=i)\mathbb P(Z_{s_1}=i|Y_{\bold s}=1).
\end{align*}
Therefore, the law of $\bold Z$ conditioned on $Y_{\bold s}=1$ corresponds to first selecting a type $i$ (with the same probability as step one of Definition \ref{def: coupling}), then setting the cards at times $\bold s$ to be be of type $i$, and then sampling without replacement the remaining cards from a deck with $m_{i'}$ cards of types $i'\neq i$ and $m_i-|\mathbf{s}|$ cards of type $i$. The coupled deck $\mathbf{Z}^\mathbf{s}$ is constructed to ensure that the cards at times $\mathbf{s}$ have the same type $i$ (conditioned on $I=i$), so we just have to show that the remainder of the deck away from $\mathbf{s}$ in $\mathbf{Z}^\mathbf{s}$ is uniformly distributed. This follows by further conditioning on the original cards at times $\mathbf{s}$ and repeated applications of Lemma \ref{lem: change one card}.
\end{proof}

Notice that by construction the two decks $\bold Z$ and $\bold Z^{\bold s}$ will coincide at all but at most $2j+2$ times. It is instructive to see an example. 
\begin{example}
Let $\bold m=(4,3,2)$, and
\begin{align*}
    \bold Z=(3,1,2,2,1,3,1,2,1).
\end{align*}
we want to construct $\bold Z^{\bold s}$ for $\bold s=(1,2,3)$. In order to select $I$, notice that there is no $3$-tuple containing $3$, while
\begin{align*}
    \mathbb P(Z_1=1|Z_1=Z_2=Z_3)=\frac{3}{4},\quad \mathbb P(Z_1=2|Z_1=Z_2=Z_3)=\frac{1}{4}.
\end{align*}
Say we select $I=1$, then $\bold s^*$ is uniform among $(2,5,7), (2,5,9), (5,7,9)$. If, for example, we pick $\bold s^*=(2,5,7)$, then we need to switch cards at positions $\bold s\setminus \bold s^*=(1,3)$ with those at positions $\bold s^*\setminus \bold s=(5,7)$. This gives
\begin{align*}
\bold Z^{\bold s}=(1,1,1,2,3,3,2,2,1).
\end{align*}
\end{example}
\subsection{Tail bounds from size-bias couplings}
Since the error in Theorem \ref{thm: Tj approx even univariate} deteriorates as $t$ gets large, we need to control the tail probabilities separately. Tail bounds were obtained in \cite{diaconis2020card}, but these are unfortunately not strong enough to establish Theorem \ref{thm: worst case}. Thus, we use the coupling constructed in Definition \ref{def: coupling} to obtain exponential tail bounds through size-bias coupling.

\begin{definition}
Let $X$ be a discrete non-negative random variable with mean $\mu>0$. A random variable $X^*$ has the \emph{size-bias distribution} with respect to $X$ if $\mathbb{P}(X^*=x)=\mu^{-1}\mathbb{P}(X=x)$ for all $x$. We say that $(X,X^*)$ is a \emph{size-bias coupling} if the pair is defined on a single probability space and $X^*$ has the size-bias distribution with respect to $X$.
\end{definition}

The following result on tail bounds for bounded size-bias coupling will immediately give the required tail bounds. It is a special case of Theorem 3.3 of \cite{CGJ18}.
\begin{theorem}[{\cite[Theorem 3.3]{CGJ18}}]
\label{thm: size-bias tail bound}
Let $X$ be a non-negative random variable with mean $\mu>0$, and suppose that there exists a size-bias coupling $X^*$ such that $\mathbb{P}(X^*\leq X+c)=1$ for some $c>0$. Then for all $0\leq x<\mu$, we have
\begin{equation*}
    \mathbb{P}(X\leq \mu-x)\leq \exp\left(-\frac{x^2}{2c\mu}\right).
\end{equation*}
\end{theorem}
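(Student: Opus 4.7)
The plan is to follow the standard Chernoff-type argument for size-bias couplings. The starting point is the defining identity of the size-bias distribution, namely
\begin{equation*}
    \mathbb{E}[X f(X)] = \mu\, \mathbb{E}[f(X^*)]
\end{equation*}
for any function $f$ for which both sides are finite. I will apply this to $f(x) = e^{-\theta x}$ for $\theta > 0$, obtaining, with $m(\theta) := \mathbb{E}[e^{-\theta X}]$, the identity $-m'(\theta) = \mu\, \mathbb{E}[e^{-\theta X^*}]$.

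Next, I would use the almost sure bound $X^* \leq X + c$ to replace $e^{-\theta X^*}$ by $e^{-\theta c} e^{-\theta X}$ from below (valid since $\theta > 0$), yielding the differential inequality
\begin{equation*}
    m'(\theta) \leq -\mu e^{-\theta c} m(\theta), \qquad \text{i.e.,} \qquad \frac{d}{d\theta} \log m(\theta) \leq -\mu e^{-\theta c}.
\end{equation*}
Integrating from $0$ to $\theta$, and using $m(0)=1$, gives the clean bound $\log m(\theta) \leq -\frac{\mu}{c}\bigl(1 - e^{-\theta c}\bigr)$.

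The remaining step is a Chernoff bound on the left tail. Markov's inequality applied to $e^{-\theta X}$ gives $\mathbb{P}(X \leq \mu - x) \leq e^{\theta(\mu - x)} m(\theta)$, hence
\begin{equation*}
    \log \mathbb{P}(X \leq \mu - x) \leq \theta(\mu - x) - \frac{\mu}{c}\bigl(1 - e^{-\theta c}\bigr).
\end{equation*}
Optimizing over $\theta > 0$ gives the critical point $\theta^\ast = -c^{-1} \log(1 - x/\mu)$, which is well defined because $0 \leq x < \mu$. Substituting back yields, with $y := x/\mu \in [0,1)$,
\begin{equation*}
    \log \mathbb{P}(X \leq \mu - x) \leq \frac{\mu}{c}\bigl[-(1-y)\log(1-y) - y\bigr].
\end{equation*}

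Finally, a short elementary calculation shows $(1-y)\log(1-y) + y \geq y^2/2$ on $[0,1)$: the difference $g(y) := (1-y)\log(1-y) + y - y^2/2$ satisfies $g(0)=0$ and $g'(y) = -\log(1-y) - y \geq 0$ by the series expansion of $-\log(1-y)$. This converts the bound to $-\mu y^2/(2c) = -x^2/(2c\mu)$, which is the desired inequality. The only mildly delicate point in the whole argument is the last convexity estimate, but it is a one-line verification; the rest is bookkeeping once the right test function is chosen.
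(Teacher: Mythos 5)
Your proof is correct. Note that the paper does not prove this statement itself; it is cited as a black box from \cite[Theorem 3.3]{CGJ18}. Your argument — apply the size-bias identity $\mathbb{E}[Xf(X)]=\mu\,\mathbb{E}[f(X^*)]$ to the test function $e^{-\theta x}$, turn the a.s.\ bound $X^*\leq X+c$ into a differential inequality for the Laplace transform $m(\theta)$, integrate to get $\log m(\theta)\leq -\tfrac{\mu}{c}(1-e^{-\theta c})$, and close with a Chernoff bound plus the elementary inequality $(1-y)\log(1-y)+y\geq y^2/2$ — is exactly the standard route taken in the cited reference, and every step checks out (in particular the optimizer $\theta^*=-c^{-1}\log(1-x/\mu)$ is nonnegative on the stated range, and the convexity estimate follows since $g(y)=(1-y)\log(1-y)+y-y^2/2$ has $g(0)=0$ and $g'(y)=-\log(1-y)-y\geq0$).
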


The couplings constructed in Definition \ref{def: coupling} give a size-bias coupling of $W_j(t)$ using the following lemma (and Lemma \ref{lem:sizebias}). The lemma is well-known, see Lemma 3.1 of \cite{CGJ18} for example. We state a special case for Bernoulli random variables, so note that if $X$ is Bernoulli, then its size-bias distribution is $X^*=1$.

\begin{lemma}
\label{lem: size-bias construction}
Let $X=\sum_{i=1}^n X_i$ be a sum of Bernoulli random variables. Suppose that for each $j=1,\dotsc, n$, we have random variables $X_i^{(j)}$ coupled with the $X_i$ so that their distribution is that of the $X_i$ conditioned on $X_j=1$. Define $X^*$ by picking a random index $I$ by $\mathbb{P}(I=i)=\mathbb{E}[X_i]/\mathbb{E}[X]$, and setting $X^*=\sum _{i=1}^n X_i^{I}$. Then $(X,X^*)$ is a size-bias coupling.
\end{lemma}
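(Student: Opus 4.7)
The strategy is the direct one: verify that the constructed $X^*$ has the size-bias distribution of $X$ by checking the characterizing identity $\mathbb{E}[f(X^*)] = \mathbb{E}[X f(X)]/\mathbb{E}[X]$ for every bounded measurable $f$. This identity is equivalent to the distributional definition given in the paper, and its verification reduces to a short tower-property calculation exploiting the hypothesis on the conditional couplings.

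First I would condition on the random index $I$ and write
\begin{equation*}
    \mathbb{E}[f(X^*)] = \sum_{i=1}^n \mathbb{P}(I=i)\, \mathbb{E}\!\left[f\!\left(\textstyle\sum_{k=1}^n X_k^{(i)}\right)\right].
\end{equation*}
By the coupling hypothesis, the joint law of $(X_k^{(i)})_{k=1}^n$ is the same as that of $(X_k)_{k=1}^n$ conditioned on $X_i = 1$; in particular the distribution of $\sum_k X_k^{(i)}$ equals that of $X$ conditioned on $X_i = 1$. Substituting, I get
\begin{equation*}
    \mathbb{E}[f(X^*)] = \sum_{i=1}^n \frac{\mathbb{E}[X_i]}{\mathbb{E}[X]}\, \mathbb{E}[f(X)\mid X_i = 1].
\end{equation*}

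Next I would use that each $X_i$ is Bernoulli, so $\mathbb{E}[X_i] = \mathbb{P}(X_i = 1)$ and $\mathbb{P}(X_i=1)\,\mathbb{E}[f(X)\mid X_i=1] = \mathbb{E}[X_i f(X)]$. Plugging in and using linearity of expectation,
\begin{equation*}
    \mathbb{E}[f(X^*)] = \frac{1}{\mathbb{E}[X]}\sum_{i=1}^n \mathbb{E}[X_i f(X)] = \frac{\mathbb{E}[X f(X)]}{\mathbb{E}[X]},
\end{equation*}
which is exactly the size-bias identity. Since $f$ was arbitrary, $X^*$ has the size-bias distribution of $X$, and since $X^*$ and $X$ are defined on the same probability space (the $X_i^{(j)}$ are all coupled to the $X_i$ by hypothesis), $(X, X^*)$ is a size-bias coupling.

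There is really no obstacle here: the content of the lemma is precisely the identity that the Bernoulli structure lets us turn the conditioning $X_i = 1$ into a weight $X_i$ inside an expectation. The only point requiring a little care is making sure the $X_i^{(j)}$ are defined on a common probability space with the $X_i$ (which is given in the hypothesis), so that the construction of $X^*$ genuinely produces a coupling rather than just a distributional identity.
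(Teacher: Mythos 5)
Your argument is correct and is the standard proof of this fact. Note that the paper does not actually supply a proof of this lemma: it remarks that the result is well known and cites Lemma~3.1 of \cite{CGJ18}, so there is nothing in the paper to compare against line by line. Your verification via the characterizing identity $\mathbb{E}[f(X^*)]=\mathbb{E}[Xf(X)]/\mathbb{E}[X]$ is exactly the argument one finds in that reference and in the size-bias literature generally, and the two points you single out as requiring care are the right ones: the coupling hypothesis must be read as a statement about the \emph{joint} law of $(X_1^{(j)},\dotsc,X_n^{(j)})$ (not just the marginals), so that $\sum_k X_k^{(j)}$ really has the law of $X$ given $X_j=1$; and the common probability space is what upgrades the distributional statement to a genuine coupling. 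One small editorial remark: the paper's Definition of the size-bias distribution as printed reads $\mathbb{P}(X^*=x)=\mu^{-1}\mathbb{P}(X=x)$, which is a typo for $\mathbb{P}(X^*=x)=\mu^{-1}x\,\mathbb{P}(X=x)$; your identity $\mathbb{E}[f(X^*)]=\mathbb{E}[Xf(X)]/\mathbb{E}[X]$ is equivalent to the corrected definition, which is the one actually used throughout.
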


\begin{lemma}
\label{lem: Tj tail bound}
We have
\begin{equation*}
    \mathbb{P}(T_j\geq t)\leq \exp\left(-\frac{x^2}{2c\mathbb{E}[W_j(t)]}\right)
\end{equation*}
for any $0\leq x<\mathbb{E}[W_j(t)]$, where $c={m^*\choose j+1}$
\end{lemma}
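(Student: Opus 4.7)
The plan is to apply the size-bias lower-tail bound of Theorem~\ref{thm: size-bias tail bound} to $X=W_j(t)$. Since $W_j(t)\ge 0$, the inclusion
\begin{equation*}
    \{T_j\ge t\}=\{W_j(t)=0\}\subseteq\{W_j(t)\le \mathbb E[W_j(t)]-x\}
\end{equation*}
holds for every $0\le x\le \mathbb E[W_j(t)]$, so it suffices to exhibit a size-bias coupling $(W_j(t),W_j(t)^*)$ satisfying
\begin{equation*}
    W_j(t)^*\le W_j(t)+\binom{m^*}{j+1}\qquad\text{almost surely};
\end{equation*}
Theorem~\ref{thm: size-bias tail bound} with $c=\binom{m^*}{j+1}$ will then close the argument.

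To build the coupling I would combine Lemma~\ref{lem: size-bias construction} with the family $\{\mathbf{Z}^{\mathbf{s}}\}_{\mathbf{s}\in\Gamma_j(t)}$ of Definition~\ref{def: coupling}. By Lemma~\ref{lem:sizebias} the indicators $Y^{\mathbf{s}}_{\mathbf{s}'}$ have the law of $Y_{\mathbf{s}'}$ conditioned on $Y_{\mathbf{s}}=1$, and by exchangeability of $\mathbf{Z}$ the means $\mathbb E[Y_{\mathbf{s}}]$ do not depend on $\mathbf{s}$; Lemma~\ref{lem: size-bias construction} therefore produces $W_j(t)^*$ by drawing $\mathbf{s}$ uniformly from $\Gamma_j(t)$ and setting $W_j(t)^*=\sum_{\mathbf{s}'}Y^{\mathbf{s}}_{\mathbf{s}'}$, which counts the monochromatic $(j+1)$-tuples of $\mathbf{Z}^{\mathbf{s}}$ contained in $[1,t]$.

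The main obstacle is the deterministic increment bound. I would condition on the step-1 type $I$ and the random subset $\mathbf{s}^*$ of Definition~\ref{def: coupling}, and split the tuples counted by $W_j(t)^*$ by their type. Tuples of type $I$ live among the at most $m^*$ positions of type $I$ in $\mathbf{Z}^{\mathbf{s}}$ that fall in $[1,t]$, contributing at most $\binom{m^*}{j+1}$. For the non-$I$ tuples, note that the coupling modifies cards only at positions in $\mathbf{s}\cup\mathbf{s}^*$ via an order-preserving bijection $\phi\colon\mathbf{s}\setminus\mathbf{s}^*\to\mathbf{s}^*\setminus\mathbf{s}$, and every card at a position of $\mathbf{s}$ in $\mathbf{Z}^{\mathbf{s}}$ carries type $I$. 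Hence a non-$I$ tuple $(r_1,\dots,r_{j+1})$ of $\mathbf{Z}^{\mathbf{s}}$ contained in $[1,t]$ must avoid $\mathbf{s}$, and the map $\psi$ that equals $\phi^{-1}$ on $(\mathbf{s}^*\setminus\mathbf{s})\cap[1,t]$ and the identity on $[1,t]\setminus(\mathbf{s}\cup\mathbf{s}^*)$ sends each $r_k$ injectively to a position in $[1,t]$ of the same non-$I$ type under $\mathbf{Z}$. This injection shows that the non-$I$ part of $W_j(t)^*$ is bounded by $W_j(t)$, and adding the type-$I$ estimate yields the required almost-sure bound. An application of Theorem~\ref{thm: size-bias tail bound} then gives the stated inequality.
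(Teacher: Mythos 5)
Your proof is correct and follows the same path as the paper: combine Lemma~\ref{lem:sizebias} and Lemma~\ref{lem: size-bias construction} to produce a size-bias coupling $W_j(t)^*$, establish the almost-sure increment bound $W_j(t)^*\le W_j(t)+\binom{m^*}{j+1}$, and then invoke Theorem~\ref{thm: size-bias tail bound}. Your explicit injection argument (via $\phi^{-1}$ together with the identity) spells out more carefully than the paper's one-line remark why the non-$I$ tuple count cannot increase, but the underlying idea is identical.
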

\begin{proof}
Recall that $W_j(t)=\sum_{\mathbf{s}\in\Gamma_j(t)}Y_\mathbf{s}$ denotes the number of $(j+1)$-tuples up to time $t$, $\Gamma_j(t)$ denotes the subsets of $\{1,\dotsc, t\}$ of size $j+1$, and $Y_\mathbf{s}$ denotes the indicator that the subset of indices in $\mathbf{s}$ is a $(j+1)$-tuple. Since the $Y_\mathbf{s}$ are exchangeable, Lemma \ref{lem:sizebias} and Lemma \ref{lem: size-bias construction} imply that if we pick $\mathbf{s'}$ uniformly, and define $W_j(t)^*=\sum_{\mathbf{s}\in\Gamma_j(t)}Y_{\mathbf{s}}^{\mathbf{s}'}$, then $(W_j(t),W_j(t)^*)$ is a size-bias coupling. Moreover, conditioning on the card type $I$ chosen in the construction given by Definition \ref{def: coupling}, we see that any added $(j+1)$-tuple must be of the card type $I$, since only cards of type $I$ can be moved to before time $t$. Since there can be at most $c={m^*\choose j+1}$ many $(j+1)$-tuples of a given type, we have $W_j(t)^*\leq W_j(t)+c$. Theorem \ref{thm: size-bias tail bound} then gives
\begin{equation*}
    \mathbb{P}(W_j(t)=0)\leq \mathbb{P}(W_j(t)\leq \mathbb{E}[W_j(t)]-x)\leq \exp\left(-\frac{x^2}{2c\mathbb{E}[W_j(t)]}\right)
\end{equation*}
for any $0\leq x<\mathbb{E}[W_j(t)]$.
\end{proof}

\section{Birthday problem without replacement}
\label{sec: birthday}
This section is devoted to the proof of Theorem \ref{thm: univariatebirthday} and its multivariate analogue that we state here. Note that the multivariate version gives a slightly weaker bound and so is not a strict generalization. The analogous asymptotic results for sampling with replacement were essentially obtained in \cite{Holst1986OnBC} and \cite{AGG89}.
\begin{theorem}\label{thm: birthday}
Consider a deck with multiplicities $\bold m$ and $1\leq k\leq m^*-1$. Assume that, for some $\epsilon>0$, the fraction of types that appear with multiplicity at least $\min(2k+1,m^*)$ is at least $\epsilon$. Also, assume $m\geq \epsilon m^*$. For any $1\leq t_1\leq \ldots \leq t_k\leq N$, let 
\begin{align*}
    \lambda_j=\frac{t_j^{j+1}\beta_{j+1}}{n^j}, \quad \lambda=\max_{1\leq j\leq k}\lambda_j,
\end{align*}
where $\beta_j$ is defined in \eqref{def: betabalance}.
Then if $P_1,\ldots, P_k$ are independent Poisson random variables with $\mathbb E[P_j]=\lambda_j$, we have
\begin{align*}
    d_{TV}((W_1(t_1),\ldots, W_k(t_k)),(P_1,\ldots, P_k))= O\left((1+\lambda)\left(\frac{\lambda}{n}\right)^{\frac{1}{k+1}}\right)
\end{align*}
where the implicit constant depends on $k$ and $\epsilon$. In particular,
\begin{align*}
    |\mathbb P(T_1\geq t_1,\ldots T_k\geq t_k)-e^{-\sum_{j=1}^k \lambda_j}|=O\left((1+\lambda)\left(\frac{\lambda}{n}\right)^{\frac{1}{k+1}}\right)
\end{align*}
\end{theorem}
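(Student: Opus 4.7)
The plan is to apply the multivariate Poisson approximation of Theorem \ref{thm: multivariate poisson approx} using the partition $\Gamma=\sqcup_{j=1}^k \Gamma_j(t_j)$, the Bernoulli indicators $Y_\mathbf{s}$ from Section \ref{sec: steinsmethod}, and the coupling $\mathbf{Z}^\mathbf{s}$ from Definition \ref{def: coupling}, which satisfies the required conditional-distribution property by Lemma \ref{lem:sizebias}. This first yields a total variation bound between $(W_1(t_1),\ldots,W_k(t_k))$ and independent Poissons with means $\mu_j:=\mathbb{E}[W_j(t_j)]$; an application of the triangle inequality together with the standard estimate $d_{TV}(\mathrm{Po}(\mu_j),\mathrm{Po}(\lambda_j))\leq|\mu_j-\lambda_j|$ then upgrades this to the target bound with Poissons of means $\lambda_j$. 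The final probability statement for the $T_j$'s is then immediate from the identity $\{T_j\geq t_j \text{ for all } j\}=\{W_j(t_j)=0 \text{ for all } j\}$.

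A direct expansion of $\mathbb{P}(Y_\mathbf{s}=1)=\sum_i (m_i)_{j+1}/(N)_{j+1}$, combined with the standard approximations $(N)_{j+1}=N^{j+1}(1+O_k(1/N))$ and $\binom{t_j}{j+1}=\tfrac{t_j^{j+1}}{(j+1)!}(1+O_k(1/t_j))$, yields $\mu_j=\lambda_j(1+O_k(1/t_j+1/n))$. Summing over $j$ then gives $\sum_j|\mu_j-\lambda_j|=O_k(\lambda\cdot t_k/n)$, which is within target once one observes $t_k/n=O_\epsilon((\lambda/n)^{1/(k+1)})$; this in turn follows from $\lambda\geq \lambda_k$ and the balance hypothesis, which lower-bounds $\beta_{k+1}$ by a constant $c_\epsilon>0$. (Throughout we may restrict to the regime $\lambda\leq n$, since the claimed bound is trivial otherwise.)

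The heart of the proof is bounding the Stein error
\[
S=\sum_{\mathbf{s}\in\Gamma}\mathbb{P}(Y_\mathbf{s}=1)^2+\sum_{\mathbf{s}\in\Gamma}\mathbb{P}(Y_\mathbf{s}=1)\sum_{\mathbf{s}'\neq\mathbf{s}}\mathbb{P}(Y_{\mathbf{s}'}^\mathbf{s}\neq Y_{\mathbf{s}'}).
\]
The diagonal part is $O_k(\lambda/n)$ via $\mathbb{P}(Y_\mathbf{s}=1)=O_{k,\epsilon}(n^{-j})$. For the off-diagonal part, the crucial observation is that $\mathbf{Z}$ and $\mathbf{Z}^\mathbf{s}$ agree outside $\mathbf{s}\cup\mathbf{s}^*$, yielding the pointwise bound
\[
\mathbf{1}_{Y_{\mathbf{s}'}^\mathbf{s}\neq Y_{\mathbf{s}'}}\leq (Y_{\mathbf{s}'}+Y_{\mathbf{s}'}^\mathbf{s})\mathbf{1}_{\mathbf{s}'\cap(\mathbf{s}\cup\mathbf{s}^*)\neq\emptyset}.
\]
Splitting the right-hand intersection into a deterministic part ($\mathbf{s}'\cap\mathbf{s}\neq\emptyset$: at most $(j+1)\binom{t_{j'}-1}{j'}$ such $\mathbf{s}'$) and a random part handled via the fact that the marginal distribution of $\mathbf{s}^*$ is uniform over $(j+1)$-subsets of $\{1,\ldots,N\}$, one obtains $\sum_{\mathbf{s}'\in\Gamma_{j'}(t_{j'})}\mathbb{P}(Y_{\mathbf{s}'}^\mathbf{s}\neq Y_{\mathbf{s}'})=O_k(\lambda_{j'}/t_{j'})$ uniformly in $\mathbf{s}\in\Gamma_j(t_j)$. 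Multiplying by $\sum_{\mathbf{s}\in\Gamma_j(t_j)}\mathbb{P}(Y_\mathbf{s}=1)=\mu_j=O(\lambda)$ and using $\lambda_{j'}/t_{j'}=(t_{j'}/n)^{j'}\beta_{j'+1}\leq C_\epsilon (\lambda/n)^{1/(k+1)}$ gives that the off-diagonal contribution is $O_\epsilon(\lambda(\lambda/n)^{1/(k+1)})$, and together with the diagonal bound we conclude $S=O((1+\lambda)(\lambda/n)^{1/(k+1)})$.

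The main obstacle is controlling the coupled-law expectation $\mathbb{E}[Y_{\mathbf{s}'}^\mathbf{s}\mathbf{1}_{\mathbf{s}'\cap(\mathbf{s}\cup\mathbf{s}^*)\neq\emptyset}]$, since $\mathbf{Z}^\mathbf{s}$ is not exchangeable: its cards at positions $\mathbf{s}$ are forced to have the selected type $I$. The resolution is to condition on the pair $(I,\mathbf{s}^*)$ in Definition \ref{def: coupling} and invoke Lemma \ref{lem: change one card}, which certifies that $\mathbf{Z}^\mathbf{s}$ restricted to the complement of $\mathbf{s}\cup\mathbf{s}^*$ is uniform over decks with the natural modified multiplicities. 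This restores the exchangeability needed to rerun the same counting argument as for $\mathbf{Z}$, with all modifications absorbed into constants depending only on $k$ and $\epsilon$.
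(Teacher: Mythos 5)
Your overall strategy is the same as the paper's: apply the multivariate Poisson bound of Theorem \ref{thm: multivariate poisson approx} with $\Gamma=\sqcup_j\Gamma_j(t_j)$ and the coupling of Definition \ref{def: coupling}/Lemma \ref{lem:sizebias}, then swap $\mu_j=\mathbb{E}[W_j(t_j)]$ for $\lambda_j$ via the triangle inequality. The diagonal sum and the $\mathrm{Po}(\mu_j)$ vs.\ $\mathrm{Po}(\lambda_j)$ comparison are handled adequately (the factor $\lambda$ in your bound $O(\lambda\, t_k/n)$ is spurious --- the correct order is $O(t_k/n)$ --- but this is absorbed by the $(1+\lambda)$ factor in the target, so it is harmless).

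The genuine gap is in the off-diagonal estimate. You claim
\[
\sum_{\mathbf{s}'\in\Gamma_{j'}(t_{j'})}\mathbb{P}\bigl(Y_{\mathbf{s}'}^\mathbf{s}\neq Y_{\mathbf{s}'}\bigr)=O_k\!\left(\frac{\lambda_{j'}}{t_{j'}}\right)\quad\text{uniformly in }\mathbf{s}\in\Gamma_j(t_j),
\]
and then multiply by $\sum_{\mathbf{s}\in\Gamma_j(t_j)}\mathbb{P}(Y_\mathbf{s}=1)=O(\lambda)$. This uniform claim is false whenever $j'<j$. Take any $\mathbf{s}\in\Gamma_j(t_j)$ that contains at least $j'+1$ indices in $\{1,\dotsc,t_{j'}\}$ (such $\mathbf{s}$ exist as soon as $\Gamma_{j'}(t_{j'})\neq\emptyset$), and let $\mathbf{s}'\subset\mathbf{s}$ be one of those $(j'+1)$-subsets. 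In the coupled deck $\mathbf{Z}^\mathbf{s}$, the cards at all positions in $\mathbf{s}$ are forced to share a type, so $Y_{\mathbf{s}'}^\mathbf{s}=1$ deterministically; meanwhile $\mathbb{P}(Y_{\mathbf{s}'}=1)=O(n^{-j'})$. Hence $\mathbb{P}(Y_{\mathbf{s}'}^\mathbf{s}\neq Y_{\mathbf{s}'})=1-o(1)$, and for such $\mathbf{s}$ the inner sum is $\Omega(1)$, not $O(\lambda_{j'}/t_{j'})$ (which is typically $\ll1$). Your pointwise inequality $\mathbf{1}_{Y_{\mathbf{s}'}^\mathbf{s}\neq Y_{\mathbf{s}'}}\leq(Y_{\mathbf{s}'}+Y_{\mathbf{s}'}^\mathbf{s})\mathbf{1}_{\mathbf{s}'\cap(\mathbf{s}\cup\mathbf{s}^*)\neq\emptyset}$ does not save you here, because the $Y_{\mathbf{s}'}^\mathbf{s}$ term is identically one on this event.

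The conclusion still holds, but not via the decoupled ``uniform inner sum $\times$ total weight'' estimate. One must keep the weight $\mathbb{P}(Y_\mathbf{s}=1)$ attached when summing over pairs, as the paper does by categorizing pairs by the overlap $h=|\mathbf{s}\cap\mathbf{s}'|$ and invoking Lemmas \ref{lemma:standardbounds}, \ref{lemma:sizebiasindicator}, and \ref{lem: overlappingtimes}. Concretely, for the problematic pairs with $\mathbf{s}'\subset\mathbf{s}$, $j'<j$, there are $O(t_{j'}^{j'+1}t_j^{j-j'})$ of them, each with $\mathbb{P}(Y_\mathbf{s}=1)=O(n^{-j})$, giving a total contribution
\[
O\!\left(\frac{t_{j'}^{j'+1}t_j^{j-j'}}{n^j}\right)=O\!\left(\lambda_{j'}\left(\frac{t_j}{n}\right)^{j-j'}\right)=O\!\left(\lambda\left(\frac{\lambda}{n}\right)^{\frac{1}{k+1}}\right),
\]
which is within the target; so the ``bad'' $\mathbf{s}$ are rare enough that the weighted sum is fine even though the unweighted inner sum is not. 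Your proof as written does not account for this and would need to be reorganized around the joint sum.
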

\begin{remark}
According to Theorem \ref{thm: univariatebirthday}, as long as one deals with roughly balanced decks, the time at which two identical card types are observed for the first time is of order $n^{\frac{1}{2}}$. On the other hand, the time at which three identical card types are observed is of order $n^{\frac{2}{3}}$. While the two random variables are correlated, it is natural to believe that they are approximately independent, given the gap in their scaling. This is the content of Theorem \ref{thm: birthday}. 
\end{remark}
We now state and prove a sequence of lemmas that will be useful in proving the main results. 
\subsection{Uneven decks}
The usefulness of the assumption on the fraction of cards with maximum multiplicity in our theorems comes from the following lemma.
\begin{lemma}\label{lem: gooddecks}
Let $\bold m$ be a vector of multiplicities consisting of $n$ distinct types and $1\leq j\leq m^*$ (recall $m^*=\max m_i$). Assume that the fraction of types that appear with multiplicity at least $j$ is greater than $\epsilon>0$, and assume that $m\geq \epsilon m^*$.
Then, if $\beta$ is defined as in \eqref{def: betabalance}, one has
\begin{align*}
    \beta_j(\bold m)=\Theta(1),
\end{align*}
where the implicit constant depends on $j$ and $\epsilon$ only.
\end{lemma}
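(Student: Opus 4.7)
The plan is to establish the upper and lower bounds on $\beta_j$ separately, since the two directions use different parts of the hypothesis.

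For the upper bound I would use the crude estimate $\binom{m_i}{j}\leq m_i^j/j! \leq (m^*)^j/j!$ in the definition
\begin{equation*}
\beta_j=\frac{1}{nm^j}\sum_{i=1}^n\binom{m_i}{j},
\end{equation*}
which yields $\beta_j\leq (m^*)^j/(j!\,m^j)$. The hypothesis $m\geq \epsilon m^*$ then immediately gives $\beta_j\leq 1/(j!\,\epsilon^j)$. This is where the ratio assumption $m\geq \epsilon m^*$ is used.

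For the lower bound I would restrict the sum to the set $B=\{i:m_i\geq j\}$, which by hypothesis satisfies $|B|\geq \epsilon n$, and then distinguish two regimes according to the size of $m$. If $m$ is bounded in terms of $j$ (say $m\leq 4j$), then already the trivial bound $\binom{m_i}{j}\geq 1$ for $i\in B$ gives $\beta_j\geq |B|/(nm^j)\geq \epsilon/m^j=\Omega_{j,\epsilon}(1)$. If instead $m$ is large, I would exploit convexity of $x\mapsto \binom{x}{j}$ on $x\geq j-1$ together with Jensen's inequality on $B$:
\begin{equation*}
\frac{1}{|B|}\sum_{i\in B}\binom{m_i}{j}\geq \binom{\bar m_B}{j},\qquad \bar m_B:=\frac{1}{|B|}\sum_{i\in B}m_i.
\end{equation*}
The types outside $B$ contribute at most $j-1$ each to $\sum_i m_i=nm$, so $\sum_{i\in B}m_i\geq nm-n(j-1)\geq nm/2$ once $m\geq 2j$, whence $\bar m_B\geq m/2$. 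Since $\binom{m/2}{j}=\Theta_j(m^j)$ in this regime, inserting this back gives $\beta_j\geq \epsilon\,\binom{m/2}{j}/m^j=\Omega_{j,\epsilon}(1)$.

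The argument is essentially a routine convexity/averaging calculation, and I do not expect any real obstacle: the only minor bookkeeping is the choice of a threshold between the two regimes (any constant multiple of $j$ works), and the observation that in the lower bound only the hypothesis on $|B|$ is needed, while $m\geq \epsilon m^*$ is invoked solely for the upper bound.
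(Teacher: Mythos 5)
Your proposal is correct and follows essentially the same approach as the paper: the upper bound via the crude estimate $\binom{m_i}{j}\le (m^*)^j/j!$ together with $m\geq\epsilon m^*$, and the lower bound by restricting to the types with $m_i\geq j$ and applying a convexity argument. The one small difference is that the paper first passes to $\binom{x}{j}\geq (x/j)^j$ and applies Jensen to $x\mapsto x^j$, which sidesteps your case split on the size of $m$ and gives the bound $\beta_j\geq \epsilon/j^j$ in one line; your direct application of Jensen to $\binom{x}{j}$ is equally valid but needs the extra bookkeeping to ensure the average stays in the range where $\binom{x}{j}=\Theta_j(x^j)$. Your observation that $m\geq\epsilon m^*$ is used only for the upper bound is also correct (the paper's line \(\mathbb E[m_I\mid m_I\geq j]\geq m\geq\epsilon m^*\) contains a redundant inequality).
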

\begin{proof}
Let $I$ be a uniformly chosen index in $\{1,\ldots, n\}$. Then we can write
\begin{equation*}
    \beta_j(\bold m)=m^{-j}\mathbb E\left[{m_I\choose j}\right].
\end{equation*}
We can bound
\begin{align*}
    {m_I\choose j}\leq \frac{m_I^j}{j!}\leq \frac{(m^*)^{j}}{j!},
\end{align*}
and $m\geq \epsilon m^*$ by assumption. Combining the two inequalities,
\begin{align*}
    \beta_j(\bold m)\leq \frac{1}{j!}\frac{1}{\epsilon^j},
\end{align*}
which proves the upper bound. 

As for the lower bound, note that
\begin{equation*}
    \mathbb E\left[{m_I\choose j}\right]\geq \epsilon \mathbb E\left[{m_I\choose j}\,\middle|\, m_I\geq j\right]
\end{equation*}
by assumption, and
\begin{equation*}
    \mathbb{E}\left[{m_I\choose j}\,\middle|\, m_I\geq j\right]\geq \mathbb{E}\left[\left(\frac{m_I}{j}\right)^j\,\middle|\, m_I\geq  j\right]\geq \frac{\mathbb{E}[m_I\mid m_I\geq j]^j}{j^j}
\end{equation*}
by Jensen's inequality. Since $\mathbb{E}[m_I\mid m_I\geq j]\geq m\geq \epsilon m^*$, the result follows.

\end{proof}
\begin{remark}
As already remarked after Theorem \ref{thm: best case, weakversion}, every balanced deck with multiplicities $\bold m=m\bold 1_n$ satisfy the assumption with $\epsilon=1$. 
\end{remark}
\begin{remark}\label{rem: fromttolambda}
Consider the definition of $\lambda$ from Theorem \ref{thm: univariatebirthday}. Lemma \ref{lem: gooddecks} shows
\begin{align*}
    t=\Theta\left(\lambda^{\frac{1}{j+1}}n^{\frac{j}{j+1}}\right),
\end{align*}
a fact we will use repeatedly.
\end{remark}

\subsection{Preliminary bounds}
We start with the following lemma. 
\begin{lemma}\label{lemma:standardbounds}
Let $\bold m$ a deck with $n$ distinct types and $j<m^*$. Assume that for some $\epsilon>0$, at least an $\epsilon$ fraction of them have multiplicity $j+1$. Also, assume that $m\geq \epsilon m^*$. Then we have the following.
\begin{itemize}
    \item For any $\bold s=(s_1,\ldots, s_{j+1})$, we have 
\begin{align*}
\mathbb P(Y_{\bold s}=1)=\Theta\left(\frac{1}{n^j}\right).
\end{align*}
\item For any $1\leq t\leq N$ and corresponding $\lambda=\lambda(t)$ as defined in Theorem \ref{thm: univariatebirthday}, we have 
\begin{align*}
    |\lambda-\mathbb E[W_j(t)]|=O\left(\frac{t^j}{n^j}\right).
\end{align*}
\end{itemize}
All implicit constants depend on $j$ and $\epsilon$ only. 
\end{lemma}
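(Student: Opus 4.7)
The plan is to reduce both estimates to direct exact computations and then invoke Lemma \ref{lem: gooddecks} for the key $\Theta(1)$ bound on $\beta_{j+1}$.

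For the first bound, I would start from exchangeability of the deck: the probability $\mathbb{P}(Y_{\mathbf{s}}=1)$ depends only on $|\mathbf{s}|=j+1$, not on the specific indices. Conditioning on the common card type and using that the deck is uniformly random gives
\begin{equation*}
\mathbb{P}(Y_{\mathbf{s}}=1) = \sum_{i=1}^n\frac{\binom{m_i}{j+1}}{\binom{N}{j+1}} = \frac{n m^{j+1}\beta_{j+1}}{\binom{N}{j+1}}
\end{equation*}
by the definition of $\beta_{j+1}$. Since $\binom{N}{j+1} = \Theta(N^{j+1}) = \Theta(n^{j+1}m^{j+1})$ with a $j$-dependent implicit constant, and since the hypotheses (an $\epsilon$-fraction of types with multiplicity $\geq j+1$, and $m\geq \epsilon m^*$) let me apply Lemma \ref{lem: gooddecks} with index $j+1$ to get $\beta_{j+1}=\Theta(1)$, the first estimate follows immediately.

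For the second bound, I would use linearity of expectation together with the formula above to write
\begin{equation*}
\mathbb{E}[W_j(t)] = \binom{t}{j+1}\cdot\frac{nm^{j+1}\beta_{j+1}}{\binom{N}{j+1}},
\end{equation*}
and rewrite the target as $\lambda = (t^{j+1}/N^{j+1})\cdot nm^{j+1}\beta_{j+1}$ using $N^{j+1} = n^{j+1}m^{j+1}$. This reduces the claim to comparing the ratios $\binom{t}{j+1}/\binom{N}{j+1}$ and $t^{j+1}/N^{j+1}$. Expanding each falling factorial as a polynomial of degree $j+1$ with $O(t^j)$ and $O(N^j)$ lower-order corrections (valid when $t\geq j+1$), and using $t\leq N$ to control cross terms, a short algebraic estimate should yield
\begin{equation*}
\left|\frac{\binom{t}{j+1}}{\binom{N}{j+1}} - \frac{t^{j+1}}{N^{j+1}}\right| = O\!\left(\frac{t^j}{N^{j+1}}\right).
\end{equation*}
Multiplying by $nm^{j+1}\beta_{j+1}$ and simplifying converts this to the advertised $O(t^j/n^j)$. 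The degenerate range $t<j+1$ is treated by noting that $W_j(t)=0$ while $\lambda = t^{j+1}\beta_{j+1}/n^j$ is itself $O(t^j/n^j)$ since both $t$ and $\beta_{j+1}$ are bounded, so the bound holds trivially there.

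I do not expect any real obstacle: both parts are essentially bookkeeping once the exact formula for $\mathbb{P}(Y_{\mathbf{s}}=1)$ is in hand, and the only substantive input is Lemma \ref{lem: gooddecks}, which supplies the two-sided control on $\beta_{j+1}$ needed for the $\Theta$ estimate to be tight. The mildest care is in keeping track of which polynomial expansions of falling factorials give which order of correction and verifying that the resulting cross-term is indeed swallowed by the claimed $O(t^j/n^j)$.
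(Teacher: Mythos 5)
Your proposal is correct and follows essentially the same route as the paper. You compute $\mathbb{P}(Y_{\mathbf{s}}=1)$ exactly by conditioning on the shared card type, invoke Lemma \ref{lem: gooddecks} for the two-sided $\Theta(1)$ control of $\beta_{j+1}$, and then compare $\binom{t}{j+1}/\binom{N}{j+1}$ to $t^{j+1}/N^{j+1}$ via the $O(1/t)$ relative error in the falling factorial; the paper packages this last comparison through the auxiliary function $g_{j}(x)=\prod_{i=1}^j(1-i/x)$ but the underlying estimate and the case split at $t\leq j+1$ are the same.
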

\begin{proof}
Define
\begin{align*}
    g_j(x):=\prod_{i=1}^j\left(1-\frac{i}{x}\right),
\end{align*}
which is an increasing and bounded function for $x>j$, and vanishes on all integer less or equal than $j$. We have
\begin{align*}
    \mathbb P(Y_{\bold s}=1)=\sum_{i=1}^n\frac{m_i\ldots (m_i-j)}{N\ldots (N-j)}=\frac{(j+1)!\beta_{j+1}(\bold m)}{n^jg_{j+1}(N)}.
\end{align*}
Since $N\geq m^*>j$, we obtain
\begin{align*}
    g_{j+1}(N)=\Theta(1),
\end{align*}
and combining with Lemma \ref{lem: gooddecks}, we obtain
\begin{align*}
    \mathbb P(Y_{\bold s}=1)=\Theta\left(\frac{1}{n^j}\right).
\end{align*}
For the second claim, note that
\begin{align*}
    \mathbb E[W_j(t)]=\mathbb P(Y_{\bold s}=1)|\Gamma_j(t)|
\end{align*}
where $\bold s$ is an arbitrary set of times in $\Gamma_j(t)$, and
\begin{align*}
    |\Gamma_j(t)|={t \choose j+1}=\frac{t^{j+1}}{(j+1)!}g_{j+1}(t).
\end{align*}
We consider now two cases
\begin{itemize}
    \item If $t\in \{1,\ldots, j+1\}$, then $\mathbb E[W_j(t)]=0$ and, since $t=\Theta(1)$, we obtain
    \begin{align*}
        |\lambda-\mathbb E[W_j(t)]|= |\lambda|=O\left(\frac{1}{n^j}\right)=O\left(\frac{t^j}{n^j}\right)
    \end{align*}
    since $j>1$. 
    \item For all other $t$'s, we can write
    \begin{align*}
        |\lambda-\mathbb E[W_j(t)]|=\left|\lambda-|\Gamma_j( t)|\mathbb P(Y_\mathbf{s}=1)\right|=\lambda\left|1-\frac{g_{j+1}(t)}{g_{j+1}(N)}\right|
    \end{align*}
    In particular, since $j+1\leq t\leq N$, we can bound
    \begin{align*}
        \left|1-\frac{g_{j+1}(t)}{g_{j+1}(N)}\right|=O\left (1-g_{j+1}(t)\right)=O\left(\frac{1}{t}\right).
    \end{align*}
Using the definition of $\lambda$, this gives
\begin{align*}
    |\lambda-\mathbb E[W_j(t)]|=O\left(\frac{\lambda}{t}\right)=O\left(\frac{t^j}{n^j}\right).
\end{align*}
\end{itemize}
\end{proof}
We now move our attention to the probability of observing a different outcome in two decks coupled according to Lemma \ref{lem:sizebias}. Given two set of times $\bold s$ and $\bold s'$, we indicate by $Y_{\bold s'}^{\bold s}$ the analogue of $Y_{\bold s'}$ obtained from deck $\bold Z^{\bold s}$.

\begin{lemma}\label{lemma:sizebiasindicator}
Let $\bold m$ a deck with $n$ distinct types and $j,l<m^*$. Assume that for some $\epsilon>0$, a fraction $\epsilon$ of the types have multiplicities at least $\min(j+l+1,m^*)$. Also, assume $m\geq \epsilon m^*$.
Then if $|\bold s|=j+1$ and $|\bold s'|=l+1$,
    \begin{align*}
        \mathbb P(Y_{\bold s'}\neq Y_{\bold s'}^{\bold s})=O\left( \frac{1}{n^{l+1-h}}\right),
    \end{align*}
where $h=h_{\bold s,\bold s'}=|\bold s\cap \bold s'|$, and the implicit constant depends on $j,l$ and $\epsilon$.
\end{lemma}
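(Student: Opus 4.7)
My plan is to split into cases according to the overlap size $h=|\mathbf{s}\cap\mathbf{s}'|$, starting in both cases from the decomposition
\[
\mathbb{P}(Y_{\mathbf{s}'}\neq Y_{\mathbf{s}'}^{\mathbf{s}})\leq \mathbb{P}(Y_{\mathbf{s}'}=1,\,Y_{\mathbf{s}'}^{\mathbf{s}}=0)+\mathbb{P}(Y_{\mathbf{s}'}=0,\,Y_{\mathbf{s}'}^{\mathbf{s}}=1).
\]
Whether a naive two-marginal estimate suffices depends sharply on $h$: when $h\geq 1$ a union bound is enough, whereas when $h=0$ one must exploit the coupling structure of Definition \ref{def: coupling} to save an extra factor of $n$.

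For $h\geq 1$, I would upper bound by $\mathbb{P}(Y_{\mathbf{s}'}=1)+\mathbb{P}(Y_{\mathbf{s}'}^{\mathbf{s}}=1)$. Lemma \ref{lemma:standardbounds} gives $\mathbb{P}(Y_{\mathbf{s}'}=1)=\Theta(1/n^l)$, and Lemma \ref{lem:sizebias} identifies $\mathbb{P}(Y_{\mathbf{s}'}^{\mathbf{s}}=1)=\mathbb{P}(Y_{\mathbf{s}'}=1\mid Y_{\mathbf{s}}=1)$. Since $h\geq 1$, the joint event $Y_{\mathbf{s}}=Y_{\mathbf{s}'}=1$ forces all $|\mathbf{s}\cup\mathbf{s}'|=j+l+2-h$ positions to share a common type, and a direct hypergeometric calculation combined with Lemma \ref{lem: gooddecks} gives $\mathbb{P}(Y_{\mathbf{s}}=Y_{\mathbf{s}'}=1)=\Theta(1/n^{j+l+1-h})$ together with $\mathbb{P}(Y_{\mathbf{s}}=1)=\Theta(1/n^j)$. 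The ratio is $\Theta(1/n^{l+1-h})$, and since $1/n^{l+1-h}\geq 1/n^l$ for $h\geq 1$ this term dominates, giving the claim.

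For $h=0$ the argument above is off by a factor of $n$, and I would instead exploit that $\mathbf{Z}$ and $\mathbf{Z}^{\mathbf{s}}$ only differ at positions in $\mathbf{s}\triangle\mathbf{s}^*$; combined with $\mathbf{s}\cap\mathbf{s}'=\emptyset$, this forces $Y_{\mathbf{s}'}=Y_{\mathbf{s}'}^{\mathbf{s}}$ whenever $\mathbf{s}^*\cap\mathbf{s}'=\emptyset$, so
\[
\mathbb{P}(Y_{\mathbf{s}'}\neq Y_{\mathbf{s}'}^{\mathbf{s}})\leq \mathbb{P}(Y_{\mathbf{s}'}=1,\,\mathbf{s}^*\cap\mathbf{s}'\neq\emptyset)+\mathbb{P}(Y_{\mathbf{s}'}^{\mathbf{s}}=1,\,\mathbf{s}^*\cap\mathbf{s}'\neq\emptyset).
\]
For the first term, a union bound over $p\in\mathbf{s}'$ together with the fact that, given $I=i$ and $Z_p=i$, the set $\mathbf{s}^*$ is a uniform $(j+1)$-subset of the $m_i$ positions of type $i$, yields $\mathbb{P}(p\in\mathbf{s}^*\mid I=i,Z_p=i)=(j+1)/m_i$; summing this against $\mathbb{P}(Z_q=i\,\forall q\in\mathbf{s}')\mathbb{P}(I=i)$ produces $O(1/n^{l+1})$ by Lemma \ref{lem: gooddecks}. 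For the second term, I would verify that the joint law of $(\mathbf{Z},I,\mathbf{s}^*)$ implicit in Definition \ref{def: coupling} factors as: $I$ sampled from its original distribution, $\mathbf{s}^*$ a uniform $(j+1)$-subset of $\{1,\dots,N\}$ independent of $I$, and $\mathbf{Z}$ conditional on $(\mathbf{s}^*=T,I=i)$ uniform over decks with $T$ entirely of type $i$. Under this factorization, $Y_{\mathbf{s}'}^{\mathbf{s}}=1$ becomes a hypergeometric question about $l+1$ specific positions (lying off $T$) sharing a common type; this conditional probability is $\Theta(1/n^l)$ uniformly in admissible $T$, and multiplying by $\mathbb{P}(\mathbf{s}^*\cap\mathbf{s}'\neq\emptyset)=O(1/n)$ gives the desired $O(1/n^{l+1})$.

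The main obstacle is precisely the case $h=0$: each marginal $\mathbb{P}(Y_{\mathbf{s}'}=1)$ and $\mathbb{P}(Y_{\mathbf{s}'}^{\mathbf{s}}=1)$ is $\Theta(1/n^l)$, so the trivial union bound misses the target by exactly the factor $n$ that we need. The gain comes from the observation that the coupling perturbs only $O(1)$ positions, so disagreement of the two indicators forces one of those perturbed positions to lie in $\mathbf{s}'$, an event of probability $O(1/n)$; cleanly extracting this through the factorization of $(\mathbf{Z},I,\mathbf{s}^*)$ is the most delicate bookkeeping step of the argument.
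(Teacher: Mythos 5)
Your proof is correct and follows essentially the same strategy as the paper: the same case split on $h\geq 1$ versus $h=0$, the same marginal bound $\mathbb{P}(Y_{\mathbf{s}'}^{\mathbf{s}}=1)=\mathbb{P}(Y_{\mathbf{s}\cup\mathbf{s}'}=1)/\mathbb{P}(Y_{\mathbf{s}}=1)$ for the former, and the same key observation that when $h=0$ a disagreement of the indicators forces $\mathbf{s}^*\cap\mathbf{s}'\neq\emptyset$. The one small divergence is in bounding $\mathbb{P}(Y_{\mathbf{s}'}^{\mathbf{s}}=1,\,\mathbf{s}^*\cap\mathbf{s}'\neq\emptyset)$, where you make the factorization of the joint law of $(\mathbf{Z},I,\mathbf{s}^*)$ explicit (with $\mathbf{s}^*$ uniform over $(j+1)$-subsets and independent of $I$) and then multiply $\mathbb{P}(\mathbf{s}^*\cap\mathbf{s}'\neq\emptyset)=O(1/n)$ by a conditional hypergeometric bound, while the paper instead encloses the event in $\{I=i\}\cap E$ for a deck event $E$ independent of $I$; these are the same calculation in different packaging.
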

\begin{proof}
If $h>0$, then $\{Y_{\bold s}=1\}\cap \{Y_{\bold s'}=1\}=\{Y_{\bold s\cup \bold s'}=1\}$ where $|\bold s\cup \bold s'|=j+l+2-h\leq j+l+1$. Notice that if $j+l+2-h>m^*$, one has $\mathbb P(Y_{\bold s\cup \bold s'}=1)=0$. Therefore, applying Lemma \ref{lemma:standardbounds} we obtain 
\begin{align*}
    \mathbb P(Y_{\bold s'}\neq Y_{\bold s'}^{\bold s})&\leq \mathbb P(Y_{\bold s'}^{\bold s}=1)+\mathbb P(Y_{\bold s'}=1)\\&= \frac{\mathbb P(Y_{\bold s\cup \bold s'}=1)}{\mathbb P(Y_{\bold s}=1)}+\mathbb P(Y_{\bold s'}=1)
    \\&=O\left( \frac{n^j}{n^{j+l+1-h}}+\frac{1}{n^{l}}\right)\\&=O\left(\frac{1}{n^{l+1-h}}\right)
\end{align*}

If $h=0$, in order for $Y_{\bold s'}\neq Y^{\bold s}_{\bold s'}$, $\bold s^*$ needs to intersect $\bold s'$ (otherwise the two decks coincide at times $\bold s'$). Thus,
\begin{equation*}
    \mathbb P(Y_{\bold s'}\neq Y_{\bold s'}^{\bold s})\leq \mathbb P(Y_{\bold s'}^{\bold s}=1,\mathbf{s}^*\cap\mathbf{s}'\neq \emptyset)+\mathbb P(Y_{\bold s'}=1,\mathbf{s}^*\cap\mathbf{s}'\neq \emptyset),
\end{equation*}
and so we consider the events $\{Y_{\bold s'}=1\}\cap\{\mathbf{s}^*\cap\mathbf{s}'\neq \emptyset\}$ and $\{Y^{\mathbf{s}}_{\bold s'}=1\}\cap\{\mathbf{s}^*\cap\mathbf{s}'\neq \emptyset\}$. Now $\{Y_{\bold s'}=1\}\cap\{\mathbf{s}^*\cap\mathbf{s}'\neq \emptyset\}\cap\{I=i\}$ is contained in the event that $I=i$ and all cards at positions $\mathbf{s}'$ have label $i$. But these two events are independent, and so
\begin{equation*}
\begin{split}
    \mathbb{P}(\{Y_{\bold s'}=1\}\cap\{\mathbf{s}^*\cap\mathbf{s}'\neq \emptyset\}\cap\{I=i\})&\leq \mathbb{P}(I=i)\mathbb{P}(Z_{s}=i ,\forall s\in\mathbf{s}')
    \\&=O\left(\frac{1}{n^{l+1}}\mathbb{P}(I=i)\right),
\end{split}
\end{equation*}
where we used
\begin{align*}
    \mathbb P(Z_s=i,\forall s\in\mathbf{s}')\leq \left(\frac{m^*}{mn}\right)^{l+1}\leq\frac{1}{\epsilon^{l+1}n^{l+1}}=O\left(\frac{1}{n^{l+1}}\right).
\end{align*}
Summing over $i$ gives $\mathbb{P}(\{Y_{\bold s'}=1\}\cap\{\mathbf{s}^*\cap\mathbf{s}'\neq \emptyset\})=O\left(\frac{1}{n^{l+1}}\right)$.

As for $\{Y^{\mathbf{s}}_{\bold s'}=1\}\cap\{\mathbf{s}^*\cap\mathbf{s}'\neq \emptyset\}\cap\{I=i\}$, it is contained in the event $\{I=i\}\cap E$, where $E$ is the event that at least one card in positions $\mathbf{s}'$ is of type $i$, and also that there are at least $l+1$ cards of the same type among those in positions $\mathbf{s}\cup\mathbf{s}'$. Now by a union bound, it suffices to control the probability that $l+1$ specific positions among $\mathbf{s}\cup\mathbf{s}'$ have the same card type, and a specific position has card type $i$, and this gives $\mathbb{P}(E)=O\left(\frac{1}{n^{l+1}}\right)$ (say by further conditioning on the card type of the $(l+1)$-tuple). Since $E$ is independent of $I$, summing over $i$ gives $\mathbb{P}(\{Y^{\mathbf{s}}_{\bold s'}=1\}\cap\{\mathbf{s}^*\cap\mathbf{s}'\neq \emptyset\})=O\left(\frac{1}{n^{l+1}}\right)$.

\end{proof}
The last ingredient is a bound on the number of overlapping times. To this aim, for $j,l<m^*$ and two integers $1\leq t_j, t_l\leq N$, define $\Gamma_{j,l,h}(t_j,t_l)$ to be the set of times $\bold s\in \Gamma_j(t_j)$ and $\bold s'\in \Gamma_l(t_l)$ such that $\bold s\neq \bold s'$ and $|\bold s\cap \bold s'|=h$. Then we have the following easy lemma.
\begin{lemma}\label{lem: overlappingtimes}
In the notation above, if $u=\min(j,l)$ and $v=\max(j,l)$, 
\begin{align*}
    \Gamma_{j,l,h}(t_j,t_l)=O\left(t_u^{u+1}t_v^{v+1-h}\right)
\end{align*}
with the implicit constant depending on $j,l$.
\end{lemma}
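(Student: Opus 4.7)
The plan is a direct three-step enumeration of the pairs in $\Gamma_{j,l,h}(t_j,t_l)$; the argument is purely combinatorial and requires no probabilistic input. By the symmetry between the two coordinates I may assume $j \le l$, so that $u=j$ and $v=l$. The case $j>l$ follows by interchanging the roles of $\mathbf{s}$ and $\mathbf{s}'$ (along with $t_j \leftrightarrow t_l$) throughout the counting.

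To specify a pair $(\mathbf{s},\mathbf{s}')$ with $|\mathbf{s} \cap \mathbf{s}'|=h$, I would first choose $\mathbf{s} \in \Gamma_j(t_j)$, which gives $\binom{t_j}{j+1} = O(t_j^{j+1})$ options. Next, I record which $h$ of the $j+1$ elements of $\mathbf{s}$ are shared with $\mathbf{s}'$, for an extra $\binom{j+1}{h}$ choices (a constant depending only on $j$ and $l$). Finally, I extend to $\mathbf{s}'$ by selecting the remaining $l+1-h$ coordinates from $\{1,\dots,t_l\}$, giving at most $\binom{t_l}{l+1-h} = O(t_l^{l+1-h})$ possibilities; this is an overcount, since the new coordinates are formally required to avoid $\mathbf{s} \setminus \mathbf{s}'$ and to be distinct from the $h$ already chosen, but an upper bound is all we need. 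Multiplying the three factors yields $O(t_j^{j+1} t_l^{l+1-h}) = O(t_u^{u+1} t_v^{v+1-h})$, with the implicit constant depending only on $j$ and $l$, as required.

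There is no real obstacle here; the only point worth flagging is that the WLOG reduction is what pins down the right exponents. Enumerating the larger tuple $\mathbf{s}'$ first instead would produce the estimate $O(t_l^{l+1} t_j^{j+1-h})$, which differs by a factor $(t_l/t_j)^h$ and is therefore weaker exactly in the regime $t_j \le t_l$ that will arise in the applications of this lemma.
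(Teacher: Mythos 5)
Your proposal is correct and uses essentially the same combinatorial counting as the paper: the paper's proof writes down the bound $\binom{t_u}{h}\binom{t_u-(u+1)}{u+1-h}\binom{t_v-(v+1)}{v+1-h}$ (choose the intersection, then the rest of the small tuple, then the rest of the large tuple) and applies $\binom{t}{j}=O(t^j)$, which is just a reordering of your three-step enumeration and gives the identical bound. Your closing remark about why one must enumerate the smaller tuple first is a fair observation, though note the lemma's conclusion is already symmetric under $j\leftrightarrow l$ via the definitions of $u,v$, so the WLOG is harmless rather than load-bearing.
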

\begin{proof}
A counting argument shows
\begin{align*}
    \Gamma_{j,l,h}(t_j,t_l)\leq{t_u\choose h}{t_u-(u+1) \choose u+1-h}{t_v-(v+1) \choose v+1-h}.
\end{align*}
Using the bound ${t\choose j}=O\left(t^j\right)$, the claim follows. 
\end{proof}

\subsection{Proof of Theorems \ref{thm: univariatebirthday} and \ref{thm: birthday}}

\begin{proof}[Proof of Theorem \ref{thm: univariatebirthday}]
First, notice that for $t\geq n$ there is nothing to prove, so we assume $t\leq n$. 
We have
\begin{equation}\label{triangular}
    d_{TV}(W_j(t),P)\leq d_{TV}(W_j(t),\widetilde P)+d_{TV}(\widetilde P, P),
\end{equation}
where $\widetilde P$ is a Poisson random variable with mean $\mathbb E\left[W_j(t)\right]$ (note here the degenerate situation when $\mathbb{E}[W_j(t)]=0$ poses no issues). Since the total variation distance between two Poisson random variables is bounded by the difference of their means, Lemma \ref{lemma:standardbounds} shows
\begin{align*}
    d_{TV}(P,\widetilde P)\leq |\lambda-\mathbb E[W]|=O\left(\frac{t^j}{n^j}\right)=O\left(\frac{t}{n}\right)
\end{align*}
for all $t\leq n$, since $j\geq 1$. This takes care of the second summand in \eqref{triangular}. 

As for the first summand, note that if $t\leq j+1$, then $W_j(t)=0$, so we can assume $t>j+1$. Then the sum can be bounded by means of Theorem \ref{thm: univariate poisson approx}, which gives
\begin{equation*}
    d_{TV}(W_j(t),\widetilde P)\leq \frac{1-e^{\mathbb{E}[W_j(t)]}}{\mathbb{E}[W_j(t)]}\left(\sum_{\bold s\in\Gamma_j(t)}\mathbb P(Y_{\bold s}=1)^2+\sum_{\substack{\bold s, \bold s'\in\Gamma_j(t)\\\mathbf{s}\neq \mathbf{s}'}}\mathbb P(Y_{\bold s}=1)\mathbb P(Y_{\bold s'}^{\bold s}\neq Y_{\bold s'})\right).
\end{equation*}
For the first sum, using Lemma \ref{lemma:standardbounds} and $\Gamma_j(t)=O(t^j)$, we have
\begin{align*}
    \sum_{\bold s\in\Gamma_j(t)}\mathbb P(Y_{\bold s}=1)^2=|\Gamma_j(t)|\mathbb P(Y_{\bold s}=1)^2=O\left(\frac{t^{j+1}}{n^{2j}}\right)=O\left(\frac{\lambda}{n^j}\right).
\end{align*}
For the second sum, we note that $\mathbb P(Y_{\bold s'}^{\bold s}\neq Y_{\bold s'})$ depends only on $|\mathbf{s}\cap\mathbf{s}'|$, and so 
\begin{equation*}
    \sum_{\substack{\bold s, \bold s'\in\Gamma_j(t)\\\mathbf{s}\neq \mathbf{s}'}}\mathbb P(Y_{\bold s}=1)\mathbb P(Y_{\bold s'}^{\bold s}\neq Y_{\bold s'})=\sum_{h=0}^j|\Gamma_{j,j,h}(t,t)|\mathbb P(Y_{\bold s}=1)\mathbb P(Y_{\bold s'}^{\bold s}\neq Y_{\bold s'}),
\end{equation*}
where on the right hand side, $\mathbf{s}$ and $\mathbf{s}'$ satisfy $|\mathbf{s}\cap\mathbf{s}'|=h$ but are otherwise arbitrary. Using Lemma \ref{lemma:sizebiasindicator} and Lemma \ref{lem: overlappingtimes} with $l=j$, 
\begin{equation*}
\begin{split}
    \sum_{h=0}^j|\Gamma_{j,j,h}(t,t)|\mathbb P(Y_{\bold s}=1)\mathbb P(Y_{\bold s'}^{\bold s}\neq Y_{\bold s'})&=\sum_{h=0}^jO\left(\frac{t^{2j+2-h}}{n^{2j+1-h}}\right)\\&=O\left(\frac{t^{j+2}}{n^{j+1}}\right)\\&=O\left(\frac{\lambda t}{n}\right),
\end{split}
\end{equation*}
where in the second last step, we used that $t\leq n$, while the last step comes from the definition of $\lambda$ and Lemma \ref{lem: gooddecks}. 

Finally, the function $f(x)=\frac{1-e^{-x}}{x}$ satisfies
\begin{equation*}
    |f(x)-f(y)|\leq \frac{|x-y|}{\max(x,1)^2}
\end{equation*}
for $0<x<y$. Lemma \ref{lemma:standardbounds} gives $|\lambda-\mathbb{E}[W_j(t)]|=O\left(\frac{t^j}{n^j}\right)$. If $t^{2j+1}\leq n^{2j}$, then $\frac{t^j}{n^j}=O(\lambda^{-1})$ and so
\begin{align*}
    \frac{1-e^{-\mathbb E\left[W_j(t)\right]}}{\mathbb E[W_j(t)]}=\frac{1-e^{-\lambda}}{\lambda}+O\left(\frac{t^j}{n^j}\right)=O\left(\frac{1}{\lambda}\right).
\end{align*}
Otherwise, $t^{2j+1}\geq n^{2j}$, and since we assumed $t\leq n$, $|\lambda-\mathbb{E}[W_j(t)]|=O(1)$, and so $|f(\lambda)-f(\mathbb{E}[W_j(t)])|=O(\lambda^{-2})$. This gives
\begin{align*}
    \frac{1-e^{-\mathbb E\left[W_j(t)\right]}}{\mathbb E[W_j(t)]}=O\left(\frac{1}{\lambda}\right).
\end{align*}

Therefore, combining all the bounds gives
\begin{align*}
       d_{TV}(W_j(t),\widetilde P)=O\left(\frac{t}{n}\right).
\end{align*}
The second part of the statement follows at once from $t=O\left(\lambda^{\frac{1}{j+1}}n^{\frac{j}{j+1}}\right)$.
\end{proof}

The proof of Theorem \ref{thm: birthday} is fairly similar. We include it here for completeness.

\begin{proof}[Proof of Theorem \ref{thm: birthday}]
It suffices to show the theorem for $\lambda\leq n$. As in the proof of Theorem \ref{thm: univariatebirthday}, the triangle inequality reduces the problem to bounding
\begin{equation*}
    d_{TV}((W_1(t_1),\ldots, W_k(t_k)),(P_1,\ldots, P_k))  
\end{equation*}
and
\begin{equation*}
    d_{TV}((\widetilde P_1,\ldots, \widetilde P_k),(P_1,\ldots, P_k)),
\end{equation*}
where the $\widetilde P_j$'s are Poisson with means $\mathbb E\left[W_j(t)\right]$. Since total variation distance between product measures is controlled by total variation distance of the marginals, the same argument used in the proof of Theorem \ref{thm: univariatebirthday} gives
\begin{align*}
    d_{TV}((P_1,\ldots, P_k)),(\widetilde P_1,\ldots, \widetilde P_k))&\leq \sum_{j=1}^kd_{TV}(P_j,\widetilde P_j)\\&=\sum_{j=1}^kO\left(\frac{t_j^j}{n^j}\right)\\&=\sum_{j=1}^kO\left(\left(\frac{\lambda}{n}\right)^{\frac{j}{j+1}}\right)\\&=O\left(\left(\frac{\lambda}{n}\right)^{\frac{1}{2}}\right).
\end{align*}
Again, we may assume that $t_j>j+1$ for all $j$ as otherwise $W_j(t_j)=0$.

In order to bound the other term, we use Theorem \ref{thm: multivariate poisson approx}. If $\Gamma=\cup_{1\leq j\leq k}\Gamma_{j}(t_j)$, then Lemma \ref{lemma:standardbounds} and the fact that $|\Gamma_{j}(t_j)|=O\left(t_j^{j+1}\right)$ gives
\begin{align*}
    \sum_{\bold s\in \Gamma}\mathbb P(Y_{\bold s}=1)^2=\sum_{j=1}^{k}|\Gamma_{j}(t_j)|\mathbb P(Y_{\bold s}=1)^2=\sum_{j=1}^{k}O\left(\frac{t^{j+1}_j}{n^{2j}}\right)=O\left(\frac{\lambda}{n}\right),
\end{align*}
where the summands depend only on $j=|\mathbf{s}|-1$.
For the second part of the bound, we note that $\mathbb P(Y_{\bold s}=1)\mathbb P(Y_{\bold s'}^{\bold s}\neq Y_{\bold s'})$ depends only on $j=|\mathbf{s}|-1$, $l=|\mathbf{s}'|-1$ and $h=|\mathbf{s}\cap\mathbf{s}'|$. Then we write
\begin{align*}
\sum_{\substack{\bold s, \bold s'\in\Gamma\\\mathbf{s}\neq\mathbf{s}'}}\mathbb P(Y_{\bold s}=1)\mathbb P(Y_{\bold s'}^{\bold s}\neq Y_{\bold s'})&=\sum_{j,l=1}^{k}\sum_{h=0}^{\min(j,l)}|\Gamma_{j,l,h}(t_j,t_l)|\mathbb P(Y_{\bold s}=1)\mathbb P(Y_{\bold s'}^{\bold s}\neq Y_{\bold s'}).
\end{align*}
By means of Lemma \ref{lemma:standardbounds}, Lemma \ref{lemma:sizebiasindicator} and Lemma \ref{lem: overlappingtimes}, together with the assumption $t_u\leq t_v$ for $u\leq v$, we obtain (here $u=\min(j,l), v=\max(j,l)$)
\begin{align*}
    |\Gamma_{j,l,h}(t_j,t_l)|\mathbb P(Y_{\bold s}=1)\mathbb P(Y_{\bold s'}^{\bold s}\neq Y_{\bold s'})&=O\left( \frac{t_v^{u+v+2-h}}{n^{u+v+1-h}}\right)\\&=O\left(\frac{\lambda^{\frac{u+v+2-h}{v+1}}}{n^{\frac{u-h+1}{v+1}}}\right)\\&=O\left(\lambda\left(\frac{\lambda}{n}\right)^{\frac{u-h+1}{v+1}}\right),
\end{align*}
where we used $t_v=O\left(\lambda^{\frac{1}{v+1}}n^{\frac{v}{v+1}}\right)$. For given $v$, the assumption $\lambda\leq n$ guarantees that the maximum is achieved for $u=h$. Therefore, we obtain
\begin{align*}
\sum_{\substack{\bold s, \bold s'\in\Gamma\\\mathbf{s}\neq\mathbf{s}'}}\mathbb P(Y_{\bold s}=1)\mathbb P(Y_{\bold s'}^{\bold s}\neq Y_{\bold s'})=O\left(\lambda\left(\frac{\lambda}{n}\right)^{\frac{1}{k+1}}\right).
\end{align*}
Combining all bounds and using Theorem \ref{thm: multivariate poisson approx}, 
\begin{align*}
    d_{TV}((W_1,\ldots, W_k),(P_1,\ldots, P_k))&=O\left(\frac{\lambda}{n}+\left(\frac{\lambda}{n}\right)^{\frac{1}{2}}+\lambda\left(\frac{\lambda}{n}\right)^{\frac{1}{k+1}}\right)\\&=O\left((1+\lambda)\left(\frac{\lambda}{n}\right)^{\frac{1}{k+1}}\right),
\end{align*}
where the last step follows from the assumption $\lambda\leq n$. 
\end{proof}
\section{Best strategy: Proof of Theorem \ref{thm: best case, weakversion}}
\label{sec: best strategy}
In this section, we prove Theorem \ref{thm: best case, weakversion}, and in the next section, we prove Theorem \ref{thm: worst case}. The proofs of both Theorems \ref{thm: best case, weakversion} and \ref{thm: worst case} are similar in spirit, but due to some technical details, the proofs proceed along different lines. Before we proceed with the proofs, we first give a sketch and outline the key differences.

The main idea is that both $\mathbb{E}[S_\mathbf{m}^+]$ and $\mathbb{E}[S_\mathbf{m}^-]$ have simple formulas in terms of the $T_j$, with (here, assume that $\mathbf{m}=m\mathbf{1}_n$)
\begin{equation}
\label{eq: best case formula}
    \mathbb{E}[S_\mathbf{m}^+]=\sum_{j=0}^{m-1}\sum_{t=1}^N\frac{1-\mathbb{P}(T_j\geq t)}{t}
\end{equation}
and
\begin{equation}
\label{eq: worst case formula}
    \mathbb{E}[S_\mathbf{m}^-]=\sum_{j=0}^{m-1}\sum_{t=1}^N\frac{\mathbb{P}(T_j\geq t)}{nm-t+1}.
\end{equation}
We then wish to use Theorem \ref{thm: Tj approx even univariate} to approximate these sums and replace the resulting sums with integrals, which will give the desired asymptotics.

Since the error in Theorem \ref{thm: Tj approx even univariate} deteriorates for large $t$, we must cut off the sum. This is the main difficulty in the proof of Theorem \ref{thm: worst case}, since in the tail, the denominators in \eqref{eq: worst case formula} becomes small.

For Theorem \ref{thm: best case, weakversion}, the main difficulty is actually in studying the resulting integral, which has a singularity at $0$, reflecting the fact that the denominators in \eqref{eq: best case formula} are small when $t$ is small. While it would be possible to directly study this integral, we prefer to give a more probabilistic proof, which mostly avoids these issues.

\begin{proof}[Proof of Theorem \ref{thm: best case, weakversion}]
We proceed as in \cite{diaconis2020guessing}. Write $ S^+_{\bold m}$ as
\begin{align*}
    S^+_{\bold m}=\sum_{t=1}^N X_t,
\end{align*}
where $X_t$ is the indicator that the $t$-th card from the bottom is guessed correctly. Let $J_t$ denote the largest multiplicity of a card among the first $t$ cards (counting from the bottom). Notice $\{J_t>j\}=\{T_{j}<t\}$. Under the best strategy, we have
\begin{align*}
    \mathbb P(X_t=1|J_t=j)=\frac{j}{t},
\end{align*}
so that we can write
\begin{align*}
    \mathbb P(X_t=1)=\sum_{j=1}^{m^*}\frac{j}{t}\mathbb P(J_t=j)=\sum_{j=0}^{m^*-1}\frac{\mathbb P(J_t> j)}{t}=\sum_{j=0}^{m^*-1}\frac{\mathbb P(T_j<t)}{t},
\end{align*}
where we take $T_0=0$. 
Summing over $t$ and exploiting linearity, 
\begin{align*}
    \mathbb E[S_{\bold m}^+]&=\sum_{t=1}^{N}\sum_{j=0}^{m^*-1}\frac{\mathbb P(T_j<t)}{t}\\&=\sum_{j=0}^{m^*-1}\sum_{t=1}^N\sum_{s=1}^{t-1}\frac{\mathbb P(T_j=s)}{t}\\&=\sum_{j=0}^{m^*-1}\sum_{s=1}^N\sum_{t=s+1}^N\frac{\mathbb P(T_j=s)}{t}\\&=\sum_{j=0}^{m^*-1}\sum_{s=1}^N(H_N-H_s)\mathbb P(T_j=s)\\&=m^*H_N-\sum_{j=1}^{m^*-1}\mathbb E\left[H_{T_j}\right],
\end{align*}
where $H_i=1+\ldots+\frac{1}{i}$ is the $i$-th harmonic number (and we use the convention $H_0=0$). If $\gamma$ denotes the Euler-Mascheroni constant, then
\begin{align*}
    |H_i-\ln i-\gamma|\leq\frac{c}{i},
\end{align*}
for some absolute constant $c$, and so we can bound, for all $1\leq j\leq m^*-1$,
\begin{align*}
    \left |\mathbb E[H_{T_j}]-\mathbb E[\ln T_j]-\gamma\right|\leq c\delta,
\end{align*}
where, since $T_j$ stochastically dominates $T_1$, we can take
\begin{align*}
    \delta=\mathbb E\left[\frac{1}{T_1}\right].
\end{align*}
Also, define $\widetilde Z_j$ via
\begin{align*}
T_j=\frac{n^\frac{j}{j+1}}{\beta_{j+1}^{\frac{1}{j+1}}}\widetilde Z^{\frac{1}{j+1}}_j,
\end{align*}
and let $Z$ be an exponential random variable with parameter one. Since $\mathbb E\ln Z=-\gamma$, we have (recall the definition of $\beta_j$ from \eqref{def: betabalance})
\begin{align*}
    \left|\mathbb E[\ln T_j]-\frac{j}{j+1}\ln n+\frac{1}{j+1}\ln \beta_{j+1}+\frac{1}{j+1}\gamma\right|\leq \delta',
\end{align*}
where
\begin{align*}
    \delta'=\sup_{1\leq j\leq m^*-1}\left|\mathbb E\left[\ln \widetilde Z_j\right]-\mathbb E\left[\ln Z\right]\right|
\end{align*}
Therefore, combining these with $\sum_{j=1}^{m^*-1}\frac{j}{j+1}=m^*-H_{m^*}$ and $N=nm$, we obtain
\begin{align*}
   \mathbb E[S^+_{\bold m}]&=m^*(\ln N+\gamma)-(m^*-H_{m^*})(\ln n+\gamma)+\sum_{j=1}^{m^*-1}\frac{1}{j+1}\ln \beta_{j+1}+O(\delta+\delta')\\&=H_{m^*}H_n+m^*\ln m+\sum_{j=1}^{m^*}\frac{1}{j}\ln \beta_j+O(\delta+\delta'+\delta'').
\end{align*}
where $\delta''$ originates from converting $\ln n$ into $H_n$ again. Because of the definitions of $\gamma_j$ and $\beta_j$ from \eqref{def: betabalance}, it remains to bound the three error terms $\delta, \delta', \delta''$ (recall that all implicit constants are allowed to depend on $m^*$ and $\epsilon$). 

The third error $\delta''$ is controlled by $O(\frac{1}{n})$. As for $\delta$, we can use Theorem \ref{thm: univariatebirthday} with the choice of $t^{2}=\frac{n\ln n}{\beta_1}$ (i.e. $\lambda=\ln n$) and obtain
\begin{align*}
    \delta&=\mathbb E\left[\frac{1}{T_1}\right]\\&\leq \frac{1}{t}+\mathbb P(T_1\geq t)\\
    &=O\left(\frac{1}{\sqrt {n\ln n}}+e^{-\ln n}+\frac{\sqrt{\ln n}}{\sqrt n}\right)\\&=O\left(\frac{\sqrt{\ln n}}{n^{\frac{1}{2}}}\right).
\end{align*} 
Finally, we bound $\delta'$. Since $\widetilde Z_j$ is bounded away from $0$ and $\infty$, integration by parts leads to 
\begin{align*}
    \mathbb E[\ln \widetilde Z_j]=\int_{0}^{1}\frac{\mathbb P(\widetilde Z_j>\lambda)-1}{\lambda}d\lambda+\int_1^{\infty}\frac{\mathbb P(\widetilde Z_{j}>\lambda)}{\lambda}d\lambda,
\end{align*}
and $\mathbb E[\ln Z]$ can be written as
\begin{align*}
    \mathbb E[\ln Z]=\int_0^{1}\frac{e^{-\lambda}-1}{\lambda}d\lambda+\int_1^{\infty}\frac{e^{-\lambda}}{\lambda}d\lambda.
\end{align*}
This gives
\begin{align*}
    |\mathbb E[\ln \widetilde Z_j]-\mathbb E[\ln Z]|\leq \int_0^\infty \frac{|\mathbb P(\widetilde Z_j>\lambda)-e^{-\lambda}|}{\lambda}d\lambda.
\end{align*}
Since $\widetilde Z_j=O(n)$, splitting the integral at $M=\ln n$ gives
\begin{align*}
|\mathbb E[\ln \widetilde Z_j]-\mathbb E[\ln Z]|=O\left( \int_0^{M}\frac{|\mathbb P(\widetilde Z_j>\lambda)-e^{-\lambda}|}{\lambda}d\lambda+\mathbb P(\widetilde Z_j>M)\ln n+\frac{e^{-M}}{M}\right).
\end{align*}
By Theorem \ref{thm: univariatebirthday}, we have
\begin{align*}
    \int_0^M\frac{|\mathbb P(\widetilde Z_j>\lambda)-e^{-\lambda}|}{\lambda}d\lambda&=O\left(\int_0^{M}\frac{1}{n^{\frac{1}{j+1}}\lambda^{\frac{j}{j+1}}}+\frac{1}{n^{\frac{j}{j+1}}\lambda^{\frac{1}{j+1}}}d\lambda\right)
    \\&=O\left(\left(\frac{M}{n}\right)^{\frac{1}{m^*}}\right),
\end{align*}
where we use the fact that $e^{-x}$ is Lipschitz for $x>0$ to extend Theorem \ref{thm: univariatebirthday} to all $0\leq \lambda\leq M$ at the cost of the second error term. Theorem \ref{thm: univariatebirthday} also gives
\begin{align*}
    \mathbb P(\widetilde Z_j>M)\ln n=O\left(\ln ne^{-M}+\ln n\left(\frac{M}{n}\right)^{\frac{1}{m^*}}\right).
\end{align*}
Thus,
\begin{align*}
    |\mathbb E[\ln \widetilde Z_j]-\mathbb E[\ln Z]|=O\left[e^{-M}\ln n+
    \ln n\left(\frac{M}{n}\right)^{\frac{1}{m^*}}\right].
\end{align*}
As $M=\ln n$, the result follows.
\end{proof}

\section{Worst strategy: Proof of Theorem \ref{thm: worst case}}
\label{sec: worst strategy}
In this section, we prove Theorem \ref{thm: worst case}. Note that in the analysis of the worst strategy, we define $W_j$ and $T_j$ starting from the top of the deck rather than the bottom. More formally, we are studying $W_j$ and $T_j$ with respect to the deck $(Z_N, \dotsc, Z_1)$ rather than $(Z_1,\dotsc, Z_N)$. Since the distribution is the same, we continue to use the same notation in this section.

\begin{remark}
In the analysis of the worst strategy, what matters are not the random variables $T_j$, which are the last time that no $(j+1)$-tuple is observed, but rather the last times, starting from the bottom of the deck, that there is at least one type of card where no $(j+1)$-tuple is observed. Under the greedy strategy, it is at these times that the card guessed might change. When $\mathbf{m}=m\mathbf{1}$, it so happens that these times are equivalent to the $T_j$ (essentially by flipping the deck upside down), but this is not the case in general. This is why we only consider the case of an even deck.
\end{remark}

We first establish the following exponential tail bounds, strengthening the analogous bounds in \cite{diaconis2020card}.
\begin{proposition}
\label{prop: tail bound}
Consider a deck with multiplicities $\bold m$, and fix $j$ with $1\leq j\leq m^*-1$ and $t$ with $1\leq t\leq N$. Assume that for some $\epsilon>0$, we have $m\geq \epsilon m^*$, and the fraction of cards of types that appear with multiplicity at least $m^*$ is at least $\epsilon$. Let
\begin{equation*}
    \lambda=\frac{t^{j+1}}{n^j}\beta_{j+1}.
\end{equation*}
Then there exists constants $C,C'>0$ depending only on $\varepsilon$ and $m^*$ such that
\begin{equation*}
    \mathbb{P}(T_j\geq t)\leq Ce^{-C'\lambda}.
\end{equation*}
\end{proposition}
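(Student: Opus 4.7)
The proposition is essentially a reformulation of Lemma~\ref{lem: Tj tail bound}, replacing the Chernoff-type bound in terms of $\mathbb{E}[W_j(t)]$ with one in terms of $\lambda$. The plan is therefore two-fold: (a) invoke Lemma~\ref{lem: Tj tail bound} with a well-chosen value of the parameter $x$, and (b) show that $\mathbb{E}[W_j(t)]$ is bounded below by a constant multiple of $\lambda$ using results already developed in Section~\ref{sec: birthday}. All of the substantive technical work (the size-bias coupling, the bounded jump property $W_j(t)^*\le W_j(t)+\binom{m^*}{j+1}$, and the comparison between $\mathbb{E}[W_j(t)]$ and $\lambda$) has already been carried out, so the proof amounts to assembling these ingredients.

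The main computation is as follows. The hypothesis of the proposition (a fraction at least $\epsilon$ of types have multiplicity $m^*$, together with $m \ge \epsilon m^*$) is strictly stronger than the hypothesis of Lemma~\ref{lemma:standardbounds} at level $j$, since $j+1 \le m^*$. Hence Lemma~\ref{lem: gooddecks} gives $\beta_{j+1} = \Theta(1)$, and Lemma~\ref{lemma:standardbounds} gives $|\mathbb{E}[W_j(t)] - \lambda| = O(\lambda/t)$ for $t \ge j+2$. In particular there is a constant $t_0$ depending only on $j$ and $\epsilon$ such that $\mathbb{E}[W_j(t)] \ge \lambda/2$ for all $t \ge t_0$. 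Applying Lemma~\ref{lem: Tj tail bound} with $x = \mathbb{E}[W_j(t)]/2$ and $c = \binom{m^*}{j+1}$ yields
\begin{equation*}
\mathbb{P}(T_j \ge t) \le \exp\!\left(-\frac{\mathbb{E}[W_j(t)]}{8c}\right) \le \exp\!\left(-\frac{\lambda}{16c}\right),
\end{equation*}
which is the desired bound with $C=1$ and $C' = 1/(16c)$.

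For the small-$t$ regime $1 \le t < t_0$, the Chernoff step of Lemma~\ref{lem: Tj tail bound} is vacuous (indeed, for $t \le j+1$ one even has $W_j(t) \equiv 0$), but $\lambda$ is uniformly bounded by some constant $\lambda_0 = \lambda_0(j,\epsilon,m^*)$, since $\lambda = O(t_0^{j+1}\beta_{j+1}/n^j) = O(1)$. The trivial estimate $\mathbb{P}(T_j \ge t) \le 1 \le e^{C'\lambda_0}e^{-C'\lambda}$ then completes the argument once $C$ is enlarged to absorb the factor $e^{C'\lambda_0}$. I do not anticipate any genuine obstacle beyond this bookkeeping between the two regimes; the exponential tail is a direct payoff of the bounded size-bias coupling constructed in Definition~\ref{def: coupling} and the sharpness of the mean estimate in Lemma~\ref{lemma:standardbounds}.
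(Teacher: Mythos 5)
Your proof is correct and follows essentially the same route as the paper: invoke the mean estimate of Lemma~\ref{lemma:standardbounds} to get $\mathbb{E}[W_j(t)]$ within a constant factor of $\lambda$ for $t$ large, then apply the size-bias Chernoff bound of Lemma~\ref{lem: Tj tail bound} with $x=\tfrac12\mathbb{E}[W_j(t)]$, absorbing the small-$t$ regime into the constant $C$. (If anything, you state the needed inequality $\mathbb{E}[W_j(t)]\ge\lambda/2$ in the correct direction, whereas the paper's wording ``$\lambda\ge\tfrac12\mathbb{E}[W_j(t)]$'' appears to have the roles reversed.)
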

\begin{proof}
Lemma \ref{lemma:standardbounds} gives $|\lambda-\mathbb{E}[W_j(t)]|=O\left(\frac{t^{j}}{n^j}\right)$. By taking $C'$ small enough and $C$ large enough, we can assume that $\lambda$ (and thus $t$) is large enough so that $\lambda\geq \frac{1}{2}\mathbb{E}[W_j(t)]$. Taking $x=\frac{1}{2}\mathbb{E}[W_j(t)]$ in Lemma \ref{lem: Tj tail bound} then immediately gives the desired result.
\end{proof}

\begin{proof}[Proof of Theorem \ref{thm: worst case}]
The result is clear when $m=1$ since only the first card has a chance of being guessed correctly, so assume $m>1$.

Let $Y_t$ be the indicator that the $(t+1)$-th guess is correct, and let $J_t$ denote the largest multiplicity of a card type within the top $t$ cards. Notice that under the worst strategy, we have
\begin{equation*}
    \mathbb{P}(Y_t=1|J_t=j)=\frac{m-j}{nm-t},
\end{equation*}
because $J_t=j$ is exactly the event that the type of least multiplicity appears $m-j$ times, among the remaining $nm-t+1$ cards (note that our conventions regarding indexing differ from those in \cite{diaconis2020card} by $1$). Then because $J_t\leq j$ if and only if $T_{j}\geq t$, we have
\begin{equation*}
\begin{split}
    \mathbb{P}(Y_t=1)&=\sum_{j=0}^m\mathbb{P}(Y_t=1|J_t=j)\mathbb{P}(J_t=j)
    \\&=\sum_{j=0}^m\frac{m-j}{nm-t}\mathbb{P}(J_t=j)
    \\&=\sum_{j=0}^{m-1}\frac{1}{nm-t}\mathbb{P}(J_t\leq j)
    \\&=\sum_{j=0}^{m-1}\frac{\mathbb{P}(T_j\geq t)}{nm-t}.
\end{split}
\end{equation*}
Summing over $0\leq t\leq mn-1$, the desired expectation is
\begin{equation*}
    \sum_{j=0}^{m-1}\sum _{t}\frac{\mathbb{P}(T_j\geq t)}{nm-t}.
\end{equation*}

Since $T_0=0$, the $j=0$ terms contribute $O(n^{-1})$ and so may be ignored (since we assumed $m>1$). Now considering the $j$-th term for $j\geq 1$, we sum from $t=1$ to $Cn^{j/(j+1)}\log n$ for some large constant $C$, and cut off the rest of the sum. By Proposition \ref{prop: tail bound}, we have
\begin{equation*}
\begin{split}
    \sum_{t\geq Cn^{j/(j+1)}\log n}\frac{\mathbb{P}(T_j\geq t)}{nm-t}&\leq\sum_{t\geq Cn^{j/(j+1)}\log n}\mathbb{P}(T_j\geq t)
    \\&\leq n\mathbb{P}\left(T_j\geq Cn^{j/(j+1)}\log n\right)
    \\&=O(n^{-1})
\end{split}
\end{equation*}
by taking $C$ large enough.

Using Theorem \ref{thm: Tj approx even univariate}, we have
\begin{equation*}
    \mathbb{P}(T_j\geq t)=e^{-\beta_{j+1}n^{-j}t^{j+1}}+O\left(\frac{t}{n}\right)
\end{equation*}
Thus, as
\begin{equation*}
    \sum _{t\leq Cn^{j/(j+1)}\log n}O\left(\frac{t}{n(nm-t)}\right)=O(n^{-2/(j+1)}\log^2 n),
\end{equation*}
we can replace $\mathbb{P}(T_j\geq t)$ with $e^{-\beta_{j+1}n^{-j}t^{j+1}}$ at the cost of an $O(n^{-2/(j+1)}\log ^2 n)$ error. Finally, we have
\begin{equation*}
\begin{split}
    &\sum_{t\leq Cn^{j/(j+1)}\log n}\frac{\exp({-\beta_{j+1}n^{-j}t^{j+1}})}{nm-t}
    \\=&\sum_{t\leq Cn^{j/(j+1)}\log n}\frac{\exp({-\beta_{j+1}n^{-j}t^{j+1}})}{nm}+O(n^{-2/(j+1)}\log^2 n).
\end{split}
\end{equation*}

Now
\begin{equation*}
    \sum_{t\leq Cn^{j/(j+1)}\log n}\frac{\exp({-\beta_{j+1}n^{-j}t^{j+1}})}{nm}
\end{equation*}
can be approximated with
\begin{equation*}
    \frac{1}{mn}\int_{0}^{Cn^{j/(j+1)}\log n}\exp({-\beta_{j+1}n^{-j}t^{j+1}})dt=\frac{\int_0^{C^{j+1}\beta_{j+1}\log^{j+1}n}\lambda^{\frac{1}{j+1}-1}e^{-\lambda}d\lambda}{(j+1)\beta_{j+1}^{\frac{1}{j+1}}mn^{\frac{1}{j+1}}}
\end{equation*}
at the cost of an $O(n^{-1})$ error. We can bound the tail by
\begin{equation*}
    \int_{C^{j+1}\beta_{j+1}\log^{j+1}n}^\infty\lambda^{\frac{1}{j+1}-1}e^{-\lambda}d\lambda =O\left(n^{-1}\right)
\end{equation*}
by choosing $C$ large enough.

Since
\begin{equation*}
    \frac{1}{j+1}\int_0^{\infty}\lambda^{\frac{1}{j+1}-1}e^{-\lambda}d\lambda=\frac{1}{j+1}\Gamma\left(\frac{1}{j+1}\right)=\Gamma\left(\frac{j+2}{j+1}\right),
\end{equation*}
and the dominant error is $O\left(n^{-\frac{2}{j+1}}\log^2 n\right)$, we have that $j$-th term in the expectation is
\begin{equation*}
    \frac{\Gamma\left(\frac{j+2}{j+1}\right)}{m(\beta_{j+1}n)^{1/(j+1)}}+O\left(n^{-\frac{2}{j+1}}\log^2 n\right).
\end{equation*}
Reindexing the sum, the final estimate is thus
\begin{equation*}
    \sum_{j=2}^{m}\frac{\Gamma\left(\frac{j+1}{j}\right)}{m(\beta_{j}n)^{1/j}}+O\left(n^{-\frac{2}{m}}\log^2 n\right),
\end{equation*}
where the implicit constant can depend on $m$. Only terms for which $j> \frac{m}{2}$ are larger than the error term, so the sum starts from $\lfloor \frac{m}{2}\rfloor +1$. We conclude using \eqref{def: betabalance}.
\end{proof}

\section*{Acknowledgment}
We warmly thank Persi Diaconis for suggesting the problem, Larry Goldstein for pointing out some references, Sam Spiro for suggesting the results on the worst strategy and Xiaoyu He for helpful comments.
 \bibliographystyle{abbrv}
  \bibliography{Bibliography.bib}
\end{document}